\newcommand{\Ini}{\textbf{Initialization: }}
\newcommand{\bfe}[1]{\textbf{\emph{#1}}}
\DeclareMathOperator{\Mon}{Mon}%
\DeclareMathOperator{\ind}{ind}%
\DeclareMathOperator{\lm}{lm}%
\DeclareMathOperator{\lc}{lc}%
\DeclareMathOperator{\lt}{lt}%
\DeclareMathOperator{\lcm}{lcm}%
\DeclareMathOperator{\Syz}{Syz}%
\begin{document}

\setcounter{page}{83}
\publyear{2021}
\papernumber{2093}
\volume{184}
\issue{2}

      \finalVersionForIOS

\title{Right Buchberger Algorithm over Bijective Skew $PBW$ Extensions}

\author{William Fajardo\thanks{Address  for correspondence: Seminario de \'Algebra Constructiva - $\text{SAC}^2$,
                  Departamento de Matem\'aticas, Universidad Nacional de Colombia, Bogot\'a, Colombia. \newline \newline
          \vspace*{-6mm}{\scriptsize{Received February 2021; \ revised November 2021.}}}
\\
Seminario de \'Algebra Constructiva - $\text{SAC}^2$\\
Departamento de Matem\'aticas\\
Universidad Nacional de Colombia, Bogot\'a, Colombia\\
wafajardoc@unal.edu.co
}

\runninghead{W. Fajardo}{Right Buchberger Algorithm over Bijective Skew $PBW$ Extensions}

\maketitle

\vspace*{-4mm}
\begin{abstract}
\noindent In this paper we present a right version of the Buchberger algorithm over skew Poincar\'e-Birkhoff-Witt extensions (skew PBW extensions for short) defined by Gallego and Lezama \cite{LezamaGallego}. This algorithm is an adaptation of the left case given in \cite{Fajardo3}. In particular, we developed a right version of the division algorithm and from this we built the right Gröbner bases theory over bijective skew $PBW$ extensions. The algorithms were implemented in the SPBWE library developed in \textrm{Maple}, this paper includes an application of these to the membership problem. The theory developed here is fundamental to complete the \texttt{SPBWE} library and thus be able to implement various homological applications that arise as result of obtaining the right Gröbner bases over skew $PBW$ extensions.
\end{abstract}

\begin{keywords}
 Non-commutative computational algebra, skew $PBW$ extensions, Buchberger algorithm, Gröbner bases, \texttt{SPBWE} library, {\rm Maple}.\\
 \textit{Mathematics Subject Classification.} 2021:  Primary: 16Z05. Secondary: 16D40, 15A21.
\end{keywords}

\section{Skew $PBW$ extensions}

In this section we introduce the \textit{bijective skew $PBW$ extensions} whose are the fundamental topic in this paper. Skew $PBW$ extensions include well known classes of Ore algebras, operator algebras and also a lot of quantum rings and algebras. The skew $PBW$ extensions have been extensively studied, see \cite{Fajardo3}, these are being implemented in the \texttt{SPBWE} library developed in \textrm{Maple}, see \cite{Fajardo1} and  \cite{Fajardo}. The main purpose of this paper is to present the theory needed to develop the right Gröbner theory and generate their respective algorithms implemented in \textrm{Maple} through \texttt{SPBWE} library., i.e., implementing the division algorithm and Buchberger algorithm in the right case, similar works have been implemented, see Fajardo \cite{Fajardo1} and \cite{Fajardo}, Fajardo-Lezama \cite{Fajardo2}, Gasiorek et al., \cite{Gasiorek}, Simson-Wojew\'odzki \cite{Simson-Wojewodzky}, Simson \cite{Simson1}, \cite{Simson2} and  \cite{Simson3}.

\begin{definition}\label{gpbwextension}
Let $R$ and $A$ be rings, we say that $A$ is a skew $PBW$ extension of $R$ {\rm(}also called
$\sigma$-$PBW$ extension{\rm)}, if the following conditions hold:
\begin{enumerate}
\itemsep=0.9pt
\item[\rm (i)]$R\subseteq A$.
\item[\rm (ii)]There exist finitely many elements $x_1,\dots ,x_n\in A$ such that $A$ is a left $R$-free module with basis

\smallskip\centerline{$\Mon(A):=\Mon\{x_1,\dots,x_n\}=\{x^{\alpha}:=x_1^{\alpha_1}\cdots
x_n^{\alpha_n}|\alpha=(\alpha_1,\dots ,\alpha_n)\in \mathbb{N}^n\}.$}

\item[\rm (iii)]For every $1\leq i\leq n$ and $r\in R-\{0\}$ there exists $c_{i,r}\in R-\{0\}$ such that\vspace*{-1mm}
\begin{equation}\label{sigmadefinicion1}
x_ir-c_{i,r}x_i\in R. \vspace*{-1mm}
\end{equation}
\item[\rm (iv)]For every $1\leq i,j\leq n$ there exists $c_{i,j}\in R-\{0\}$ such that\vspace*{-1mm}
\begin{equation}\label{sigmadefinicion2}
x_jx_i-c_{i,j}x_ix_j\in R+Rx_1+\cdots +Rx_n.\vspace*{-1mm}
\end{equation}
\end{enumerate}
In this case the extension is denoted by $A=\sigma(R)\langle x_1,\dots ,x_n\rangle$, and $R$ is called the ring of coefficients of the extension $A$.
\end{definition}
\begin{remark}\label{notesondefsigampbw}
Each element $f\in A-\{0\}$ has a unique representation in the form $f=c_1X_1+\cdots+c_tX_t$, with $c_i\in R-\{0\}$ and $X_i\in \Mon(A)$, $1\leq i\leq t$.
\end{remark}
The following proposition (see \cite{Fajardo3}, Proposition 1.1.3) justifies the notation given in Definition \ref{gpbwextension} of the skew $PBW$ extensions.
\begin{proposition}\label{sigmadefinition}
Let $A=\sigma(R)\langle x_1,\ldots,x_n\rangle$ be a skew $PBW$ extension of $R$. Then, for $1\leq i\leq n$, there exist an injective ring
endomorphism $\sigma_i:R\rightarrow R$ and a $\sigma_i$-derivation $\delta_i:R\rightarrow R$ such
that

\smallskip\centerline{$x_ir=\sigma_i(r)x_i+\delta_i(r)$,}

\smallskip\noindent  for every $r\in R$.
\end{proposition}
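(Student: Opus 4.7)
The plan is to extract $\sigma_i$ and $\delta_i$ directly from condition (iii) of Definition~\ref{gpbwextension} and then verify the required algebraic properties by exploiting the uniqueness of representation in the PBW basis (Remark~\ref{notesondefsigampbw}) together with associativity of $A$.

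First I would define, for each $1\leq i\leq n$ and each $r\in R-\{0\}$, $\sigma_i(r):=c_{i,r}$ (the scalar provided by (iii)) and $\delta_i(r):=x_ir-c_{i,r}x_i\in R$, with the convention $\sigma_i(0):=0$ and $\delta_i(0):=0$. By construction, $x_ir=\sigma_i(r)x_i+\delta_i(r)$. The uniqueness of the expression of any element of $A$ as a left $R$-linear combination of elements of $\Mon(A)$ (Remark~\ref{notesondefsigampbw}) forces $\sigma_i(r)$ and $\delta_i(r)$ to be uniquely determined, which guarantees that both maps are well defined.

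Next I would verify additivity: writing $x_i(r+s)$ in two ways and comparing coefficients of $x_i$ and the constant term against the unique PBW representation yields $\sigma_i(r+s)=\sigma_i(r)+\sigma_i(s)$ and $\delta_i(r+s)=\delta_i(r)+\delta_i(s)$. For multiplicativity and the twisted Leibniz rule I would use associativity: expand $(x_ir)s$ as
\[
(x_ir)s=\bigl(\sigma_i(r)x_i+\delta_i(r)\bigr)s=\sigma_i(r)\sigma_i(s)x_i+\sigma_i(r)\delta_i(s)+\delta_i(r)s,
\]
and compare with $x_i(rs)=\sigma_i(rs)x_i+\delta_i(rs)$; by uniqueness this gives $\sigma_i(rs)=\sigma_i(r)\sigma_i(s)$ together with $\delta_i(rs)=\sigma_i(r)\delta_i(s)+\delta_i(r)s$, which is precisely the $\sigma_i$-derivation identity. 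Applying the defining formula to $r=1$ yields $\sigma_i(1)=1$ and $\delta_i(1)=0$, so $\sigma_i$ is a ring endomorphism.

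Finally I would handle injectivity: condition (iii) demands $c_{i,r}\in R-\{0\}$ whenever $r\neq 0$, so $\sigma_i(r)=0$ implies $r=0$. No step is really an obstacle here; the only subtlety is being careful that the uniqueness-of-representation argument applies to expressions that mix terms on $x_i$ and constant terms, but this is exactly the content of Remark~\ref{notesondefsigampbw} since $1$ and $x_i$ are distinct elements of $\Mon(A)$.
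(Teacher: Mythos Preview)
Your argument is correct and is the standard way to establish this result: read off $\sigma_i$ and $\delta_i$ from condition (iii), use uniqueness of the PBW representation to see they are well defined, and then check the endomorphism, derivation, and injectivity properties via associativity and the nonvanishing requirement $c_{i,r}\neq 0$.

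Note, however, that the paper does not actually supply a proof of this proposition: it is only stated, with a reference to \cite{Fajardo3}, Proposition 1.1.3. So there is no ``paper's own proof'' to compare against here; what you have written is precisely the argument one expects to find in that reference.
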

\begin{definition}\label{sigmapbwbijectve}
Let $A=\sigma(R)\langle x_1,\ldots,x_n\rangle$ be a skew $PBW$ extension of $R$. $A$ is called bijective if $\sigma_i$ is bijective for every $1\leq i\leq n$ and $c_{i,j}$ is invertible for any $1\leq i,j\leq n$.
\end{definition}
\begin{definition}\label{1.1.6}
Let $A=\sigma(R)\langle x_1,\ldots,x_n\rangle$ be a skew $PBW$ extension of $R,$ with endomorphisms $\sigma_i, 1\leq i\leq n$. We will use the following notation.
\begin{enumerate}
\item[\rm (i)]For $\alpha=(\alpha_1,\dots,\alpha_n)\in \mathbb{N}^n$, $\sigma^{\alpha}:=\sigma_1^{\alpha_1}\cdots \sigma_n^{\alpha_n},$  $|\alpha|:=\alpha_1+\cdots+\alpha_n$ and if $A$ is bijective $\sigma^{-\alpha}\!:=\!\sigma_n^{-\alpha_n}\cdots \sigma_1^{-\alpha_1}.],$ Moreover, if $\beta\!=\!(\beta_1,...,\beta_n)\in \mathbb{N}^n$, then $\alpha+\beta\!:=\!(\alpha_1+\beta_1,...,\alpha_n+\beta_n)$.
\item[\rm (ii)]For $X=x^{\alpha}\in \Mon(A)$, $\exp(X):=\alpha$ and $\deg(X):=|\alpha|$.
\item[\rm (iii)]Let $0\neq f\in A$; if $f=c_1X_1+\cdots +c_tX_t$, with $X_i\in \Mon(A)$ and $c_i\in R-\{0\}$, then $\deg(f):=\max\{\deg(X_i)\}_{i=1}^t.$
\end{enumerate}
\end{definition}
The following characterization of skew $PBW$ extensions was given in \cite{gallego-thesis}.
\begin{theorem}\label{coefficientes}
Let $A$ be a ring of a left polynomial type over $R$ with respect to $\{x_1,\dots,x_n\}$. $A$ is a skew
$PBW$ extension of $R$ if and only if the following conditions hold:
\begin{enumerate}
\item[\rm (a)]For every $x^{\alpha}\in \Mon(A)$ and every $0\neq
r\in R$ there exist unique elements $r_{\alpha}:=\sigma^{\alpha}(r)\in R-\{0\}$ and $p_{\alpha
,r}\in A$ such that
\begin{equation}\label{611}
x^{\alpha}r=r_{\alpha}x^{\alpha}+p_{\alpha , r},
\end{equation}
where $p_{\alpha ,r}=0$ or $\deg(p_{\alpha ,r})<|\alpha|$ if $p_{\alpha , r}\neq 0$. Moreover, if
$r$ is left invertible, then $r_\alpha$ is left invertible.

\item[\rm (b)]For every $x^{\alpha},x^{\beta}\in \Mon(A)$ there
exist unique elements $c_{\alpha,\beta}\in R$ and $p_{\alpha,\beta}\in A$ such that\vspace*{-1mm}
\begin{equation}\label{612}
x^{\alpha}x^{\beta}=c_{\alpha,\beta}x^{\alpha+\beta}+p_{\alpha,\beta},\vspace*{-1mm}
\end{equation}
where $c_{\alpha,\beta}$ is left invertible, $p_{\alpha,\beta}=0$ or
$\deg(p_{\alpha,\beta})<|\alpha+\beta|$ if $p_{\alpha,\beta}\neq 0$.
\end{enumerate}
\end{theorem}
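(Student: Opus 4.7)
The plan is to prove the two implications separately; the reverse direction is nearly immediate, while the forward direction requires two parallel inductions with a common technical core. For $(\Leftarrow)$, specialize the hypothesized conditions to the smallest exponents. Applying (a) with $\alpha=e_i$ yields $x_ir=\sigma^{e_i}(r)x_i+p_{e_i,r}$ with $\deg(p_{e_i,r})<1$, so $p_{e_i,r}\in R$ and setting $c_{i,r}:=\sigma^{e_i}(r)\in R-\{0\}$ recovers item (iii) of Definition \ref{gpbwextension}. Applying (b) at $(\alpha,\beta)=(e_j,e_i)$ with $j>i$ gives $x_jx_i=c_{e_j,e_i}x_ix_j+p_{e_j,e_i}$ with $\deg(p_{e_j,e_i})<2$, so the remainder lies in $R+Rx_1+\cdots+Rx_n$ and $c_{e_j,e_i}$ is left invertible (hence nonzero); setting $c_{i,j}:=c_{e_j,e_i}$ recovers item (iv), the remaining cases $i\ge j$ being handled analogously.

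For $(\Rightarrow)$, assume $A$ is a skew $PBW$ extension and prove (a) by induction on $|\alpha|$. The base $|\alpha|=1$ is Proposition \ref{sigmadefinition}, giving $r_{e_i}=\sigma_i(r)$ and $p_{e_i,r}=\delta_i(r)\in R$. For the inductive step, let $i$ be the smallest index with $\alpha_i>0$, so $x^\alpha=x_i\cdot x^{\alpha-e_i}$; apply the induction hypothesis to $x^{\alpha-e_i}r$ and then slide $x_i$ past the resulting coefficient using Proposition \ref{sigmadefinition}:
\begin{equation*}
x^\alpha r = \sigma_i\!\bigl(\sigma^{\alpha-e_i}(r)\bigr)x^\alpha + \bigl[\delta_i\!\bigl(\sigma^{\alpha-e_i}(r)\bigr)x^{\alpha-e_i} + x_i\,p_{\alpha-e_i,r}\bigr].
\end{equation*}
Taking $i$ smallest ensures $\sigma_i\circ\sigma^{\alpha-e_i}=\sigma^\alpha$ in the sense of Definition \ref{1.1.6}(i); uniqueness is forced by the $R$-basis property of $\Mon(A)$; and since $\sigma^\alpha$ is a composition of ring endomorphisms, left invertibility of $r$ transfers to $\sigma^\alpha(r)$ via $\sigma^\alpha(s)\sigma^\alpha(r)=\sigma^\alpha(sr)=1$. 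Part (b) follows the same pattern, inducting on $|\alpha|$ with base $|\alpha|=0$ trivial, decomposing $x^\alpha=x_i\cdot x^{\alpha-e_i}$ with $i$ smallest, applying the hypothesis to $x^{\alpha-e_i}x^\beta$, and commuting $x_i$ through the resulting coefficient by Proposition \ref{sigmadefinition}.

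The main technical obstacle, in both inductions, is ensuring the remainder has strictly smaller degree. In each case one is left with a term $x_i\cdot q$ where $q$ has degree less than $|\alpha|-1$ (respectively $|\alpha|+|\beta|-1$), and one needs the product to have degree strictly less than $|\alpha|$ (respectively $|\alpha|+|\beta|$). This reduces to an auxiliary claim that $\deg(x_ia)\le\deg(a)+1$ for every $a\in A$, proved by a short secondary induction on $\deg(a)$ using conditions (iii) and (iv) of Definition \ref{gpbwextension}. Part (b) adds a subtlety: when $\beta$ has nonzero entries in positions below $i$, the product $x_i\cdot x^{\alpha-e_i+\beta}$ is not already in the basis form $x^{\alpha+\beta}$ and must be reordered via (iv), but the resulting corrections are again controlled by the same auxiliary degree bound.
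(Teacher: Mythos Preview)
The paper does not supply its own proof of this theorem; it simply records the statement and attributes it to \cite{gallego-thesis}. Your outline follows the standard inductive argument and is sound in structure, so there is nothing in the paper to contrast it against. Two details are glossed over, however, and both concern part (b).

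First, left invertibility of $c_{\alpha,\beta}$ is never established. Your recursion produces $c_{\alpha,\beta}$ as $\sigma_i(c_{\alpha-e_i,\beta})$ times the leading coefficient arising from the reordering of $x_i x^{\alpha-e_i+\beta}$, and that coefficient is ultimately a product of images under the $\sigma_k$ of the constants $c_{j,i}$ from Definition~\ref{gpbwextension}(iv). But (iv) only asserts $c_{j,i}\neq 0$; the missing step is that applying (iv) to both ordered pairs $(i,j)$ and $(j,i)$ and comparing coefficients of $x_ix_j$ in the basis $\Mon(A)$ forces $c_{j,i}c_{i,j}=1$, so each $c_{i,j}$ is in fact left invertible, and this property is preserved under ring endomorphisms and under products. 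Second, the induction on $|\alpha|$ with sole base case $|\alpha|=0$ is not quite self-contained: the step at level $|\alpha|$ requires knowing the $|\alpha|=1$ instance of (b) for the reordering of $x_i x^{\alpha-e_i+\beta}$, and your auxiliary bound $\deg(x_ia)\le\deg(a)+1$ controls the remainder but says nothing about the form of the leading term. The clean fix is to prove the case $|\alpha|=1$ first, by a separate induction on $\sum_{j<i}\beta_j$ (the number of transpositions needed), and only then run the main induction on $|\alpha|\ge 1$.
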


\begin{remark}\label{identities}\hfill
\begin{enumerate}
\item[\rm (i)] Let $\theta,\gamma,\beta\in \mathbb{N}^n$ and $c\in R.$ Then we have the following identities:
    \begin{align}
    \sigma^\theta(c_{\gamma,\beta})c_{\theta,\gamma+\beta}&    =c_{\theta,\gamma}c_{\theta+\gamma,\beta},\\
    \sigma^\theta(\sigma^\gamma (c))c_{\theta,\gamma} &
    =c_{\theta,\gamma}\sigma^{\theta+\gamma}(c). \vspace*{-1mm}
    \end{align}
\item[\rm (ii)] One concludes from Theorem \ref{coefficientes} that if $A$ is bijective, then $c_{\alpha,\beta}$ is invertible for any $\alpha,\beta\in \mathbb{N}^n$.
\end{enumerate}
\end{remark}

\section{Orders on $\Mon(A^{m})$}

In this section we will compile some results taken from \cite{Fajardo3} that will be used in the theory of reduction and the theory of Gröbner for the right case.
\begin{definition}
\begin{enumerate}
\item[\rm (a)] We define in $\Mon(A)$ the \textit{deglex order} by the formulas:
\begin{center}
$x^{\alpha}\succeq x^{\beta}\Longleftrightarrow
\begin{cases}
x^{\alpha}=x^{\beta}\\
\text{or} & \\
x^{\alpha}\neq x^{\beta}\, \text{but} \, |\alpha|> |\beta| & \\
\text{or} & \\
x^{\alpha}\neq x^{\beta},|\alpha|=|\beta|\, \text{but $\exists$ $i$ with} &
\alpha_1=\beta_1,\dots,\alpha_{i-1}=\beta_{i-1},\alpha_i>\beta_i.
\end{cases}$
\end{center}
It is clear that the deglex order is a total order.
\eject
\item[\rm (b)] If $x^{\alpha}\succeq x^{\beta}$ and $x^{\alpha}\neq x^{\beta}$ we write $x^{\alpha}\succ x^{\beta}$.
\item[\rm (c)] Assume that the element $f\in A\setminus\{0\}$ has the unique form $f=c_1x^{\alpha_1}+\cdots+c_tx^{\alpha_t}$, with $c_i\in R-\{0\}$, $1\leq i\leq t$, and $x^{\alpha_1}\succ\cdots\succ x^{\alpha_t}$. We define the monomial $x^{\alpha_1}$ to be the \textit{leader monomial} of $f$ and we write $\lm(f):=x^{\alpha_1}$ ; $c_1$ is the \textit{leader coefficient} of $f$, $\lc(f):=c_1$ and $c_1x^{\alpha_1}$ is the \textit{leader term} of $f$ denoted by $\lt(f):=c_1x^{\alpha_1}$. If $f=0$, we define $\lm(0):=0,\lc(0):=0,\lt(0):=0$ and we set $X\succ 0$ for any $X\in \Mon(A)$.
\end{enumerate}
\end{definition}

\subsection{Monomial orders in skew $PBW$ extensions}\label{sec3}

Let $A=\sigma(R)\langle x_1, \dots , x_n\rangle$ be a skew $PBW$ extension of $R$ and let $\succeq$ be a total order defined on $\Mon(A)$. If $x^{\alpha}\succeq x^{\beta}$ but $x^{\alpha}\neq
x^{\beta}$ we will write $x^{\alpha}\succ x^{\beta}$. $x^{\beta}\preceq x^{\alpha}$ means that $x^{\alpha}\succeq x^{\beta}$. Let $f\neq 0$ be a polynomial of $A.$ If
\begin{center}
$f=c_1X_1+\cdots +c_tX_t$,
\end{center}
with $c_i\in R-\{0\}$ and $X_1\succ \cdots \succ X_t$ are the monomials of $f$, then $\lm(f):=X_1$ is the \textit{leading monomial} of $f$, $\lc(f):=c_1$ is the \textit{leading coefficient}
of $f$ and $\lt(f):=c_1X_1$ is the \textit{leading term} of $f$. If $f=0$, we define $\lm(0):=0,\lc(0):=0,\lt(0):=0$, and we set $X\succ 0$ for any $X\in \Mon(A)$. Thus, we extend $\succeq$ to $\Mon(A)\cup \{0\}$.
\begin{definition}\label{monomialorder}
Let $\succeq$ be a total order on $\Mon(A)$, it is said that $\succeq$
is a monomial order on $\Mon(A)$ if the following conditions hold:
\begin{enumerate}
\item[\rm (i)]For every $x^{\beta},x^{\alpha},x^{\gamma},x^{\lambda}\in \Mon(A)$
\begin{center}
$x^{\beta}\succeq x^{\alpha}$ $\Rightarrow$
$\lm(x^{\gamma}x^{\beta}x^{\lambda})\succeq
\lm(x^{\gamma}x^{\alpha}x^{\lambda})$.
\end{center}

\item[\rm (ii)]$x^{\alpha}\succeq 1$, for every $x^{\alpha}\in
\Mon(A)$.
\item[\rm (iii)]$\succeq$ is degree compatible, i.e., $|\beta|\geq |\alpha|\Rightarrow x^{\beta}\succeq
x^{\alpha}$.
\end{enumerate}
\end{definition}
Monomial orders are also called \textit{admissible orders}. It is worth noting that every monomial order on $\Mon(A)$ is a well order. Thus, there are not infinite decreasing chains in $\Mon(A)$. From now on we will assume that $\Mon(A)$ is endowed with some monomial order.
\begin{definition}
Let $x^{\alpha},x^{\beta}\in \Mon(A)$, we say that $x^{\alpha}$
divides $x^{\beta}$, denoted by $x^{\alpha}|x^{\beta}$, if there
exists $x^{\gamma},x^{\lambda}\in \Mon(A)$ such that
$x^{\beta}=\lm(x^{\gamma}x^{\alpha}x^{\lambda})$. We will say also
that any monomial $x^{\alpha}\in \Mon(A)$ divides the polynomial
zero.
\end{definition}
The condition (iii) of Definition \ref{monomialorder} is needed in the proof of the following proposition (see \cite{Fajardo3}, Proposition 13.1.4), and this one will be used in right Division Algorithm (Theorem \ref{algdivformodules}).
\begin{proposition}\label{11.1.4}
Let $A$ be a bijective skew PBW extension and $x^{\alpha},x^{\beta}\in \Mon(A)$ and $f,g\in A-\{0\}$. Then,
\begin{enumerate}
\item[\rm (a)]
$\lm(x^{\alpha}g)=\lm(x^{\alpha}\lm(g))=x^{\alpha+\exp(\lm(g))}$,
i.e., $\exp(\lm(x^{\alpha}g))=\alpha+\exp(\lm(g)$. In particular,
\begin{center}
$\lm(\lm(f)\lm(g))=x^{\exp(\lm(f))+\exp(\lm(g))}$, i.e.,

$\exp(\lm(\lm(f)\lm(g)))=\exp(\lm(f))+\exp(\lm(g))$
\end{center}
and
\begin{equation}
\lm(x^{\alpha}x^{\beta})=x^{\alpha+\beta}, \ i.e.,\
\exp(\lm(x^{\alpha}x^{\beta}))=\alpha+\beta .\label{divrelation}
\end{equation}
\item[\rm (b)]The
following conditions are equivalent:
\begin{enumerate}
\item[\rm (i)]$x^{\alpha}|x^{\beta}$.
\item[\rm (ii)]There exists a unique $x^{\theta}\in \Mon(A)$ such that
$x^{\beta}=\lm(x^{\theta}x^{\alpha})=x^{\theta+\alpha}$ and hence
$\beta=\theta+\alpha$.
\item[\rm (iii)]There exists a unique $x^{\theta}\in \Mon(A)$ such that
$x^{\beta}=\lm(x^{\alpha}x^{\theta})=x^{\alpha+\theta}$ and hence
$\beta=\alpha+\theta$.
\item[\rm (iv)]$\beta_i\geq \alpha_i$ for $1\leq i\leq n$, with
$\beta:=(\beta_1,\dots,\beta_n)$ and
$\alpha:=(\alpha_1,\dots,\alpha_n)$.
\end{enumerate}
\end{enumerate}
\end{proposition}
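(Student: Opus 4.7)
The plan is to prove the atomic identity~(\ref{divrelation}) first, bootstrap it to the general formula in~(a), and then deduce~(b) by reducing divisibility to a coordinate-wise statement in $\mathbb{N}^n$.

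For the atomic formula $\lm(x^{\alpha}x^{\beta})=x^{\alpha+\beta}$, I would apply Theorem~\ref{coefficientes}(b) directly: $x^{\alpha}x^{\beta}=c_{\alpha,\beta}x^{\alpha+\beta}+p_{\alpha,\beta}$, where $c_{\alpha,\beta}$ is invertible by Remark~\ref{identities}(ii) (so nonzero) and $\deg p_{\alpha,\beta}<|\alpha+\beta|$. Degree compatibility of $\succeq$ (Definition~\ref{monomialorder}(iii)) then forces $x^{\alpha+\beta}$ to dominate every monomial appearing in $p_{\alpha,\beta}$, so it is the leading monomial.

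To upgrade to $\lm(x^{\alpha}g)=x^{\alpha+\exp(\lm(g))}$, I would write $g=c_1X_1+\cdots+c_tX_t$ with $X_1=\lm(g)$ and expand term by term using Proposition~\ref{sigmadefinition} and Theorem~\ref{coefficientes}:
\[
x^{\alpha}c_iX_i=\sigma^{\alpha}(c_i)\,c_{\alpha,\exp(X_i)}\,x^{\alpha+\exp(X_i)}+(\text{strictly lower-degree terms}).
\]
Here $\sigma^{\alpha}(c_i)\neq 0$ because each $\sigma_j$ is bijective and $c_i\neq 0$, while $c_{\alpha,\exp(X_i)}$ is invertible by bijectivity of $A$, so the displayed coefficient is nonzero. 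To single out $x^{\alpha+\exp(X_1)}$ as the leading monomial of the sum $\sum_i x^{\alpha}c_iX_i$, note that degree compatibility eliminates every index $i$ with $|\exp(X_i)|<|\exp(X_1)|$, and for $i\neq 1$ with $|\exp(X_i)|=|\exp(X_1)|$ the axiom of Definition~\ref{monomialorder}(i), applied with $x^\gamma=x^{\alpha}$ and $x^\lambda=1$ and combined with the already-proved atomic identity, yields $x^{\alpha+\exp(X_1)}=\lm(x^{\alpha}X_1)\succ\lm(x^{\alpha}X_i)=x^{\alpha+\exp(X_i)}$. The second equality $\lm(x^{\alpha}g)=\lm(x^{\alpha}\lm(g))$ is the specialization $g=\lm(g)$ of what was just shown, and $\lm(\lm(f)\lm(g))=x^{\exp(\lm(f))+\exp(\lm(g))}$ is another instance of~(\ref{divrelation}).

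For~(b), the equivalence (ii)$\Leftrightarrow$(iii) is immediate because $\theta+\alpha=\alpha+\theta$ in $\mathbb{N}^n$, and (ii)$\Leftrightarrow$(iv) reduces to divisibility in $\mathbb{N}^n$ coordinate-wise with the unique witness $\theta_i:=\beta_i-\alpha_i$. The decisive direction is (i)$\Rightarrow$(ii): by the definition of $x^{\alpha}\mid x^{\beta}$ there exist $x^\gamma,x^\lambda\in\Mon(A)$ with $x^{\beta}=\lm(x^\gamma x^{\alpha}x^\lambda)$, and iterating~(\ref{divrelation}) collapses the right-hand side to $x^{\gamma+\alpha+\lambda}$, so $\theta:=\gamma+\lambda$ works. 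The reverse (ii)$\Rightarrow$(i) is trivial, using $x^\gamma=x^\theta$ and $x^\lambda=1$. The main obstacle is precisely the cancellation analysis in~(a): proving that no tie among the $X_i$ of maximal total degree can defeat $X_1$ requires first establishing the atomic case~(\ref{divrelation}) and then invoking admissibility in the correct direction, since using~(a) itself at that point would be circular.
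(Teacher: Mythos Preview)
Your argument is correct and self-contained. The paper does not actually prove this proposition: its entire proof reads ``Apply [3; Proposition 13.1.4]'', deferring to the book \cite{Fajardo3}. Your route---establishing the atomic identity~(\ref{divrelation}) from Theorem~\ref{coefficientes}(b) and degree compatibility, then lifting it to general $g$ via bijectivity of the $\sigma_i$ and the admissibility axiom, and finally collapsing divisibility to a coordinate-wise condition in $\mathbb{N}^n$---is the standard one and is almost certainly what the cited reference does. One minor simplification: once~(\ref{divrelation}) is in hand, the degree case split in your bootstrap step is redundant, since Definition~\ref{monomialorder}(i) applied with $x^{\gamma}=x^{\alpha}$, $x^{\lambda}=1$ already yields $x^{\alpha+\exp(X_1)}\succeq x^{\alpha+\exp(X_i)}$ for every $i\neq 1$, and equality would force $\exp(X_1)=\exp(X_i)$, hence $X_1=X_i$.
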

\begin{proof}
Apply [3; Proposition 13.1.4].
\end{proof}
\begin{remark}\hfill
\begin{itemize}
  \item[\rm (i)] Let $\succeq$ be the monomial order on $\Mon(A).$ If there exists $f=x^{\gamma_1}c_1+\cdots+x^{\gamma_t}c_t\in A-\{0\}$ such that $x^{\beta}=x^{\alpha}f$ or $x^{\beta}=fx^{\alpha}$, then by Proposition \ref{11.1.4}, $x^{\beta}=x^{\alpha+\gamma_1}$, i.e., $x^{\alpha}|x^{\beta}.$
  \item[\rm (ii)] We note that there exists a least common multiple of two elements of $\Mon(A)$: in fact, let $x^{\alpha},x^{\beta}\in \Mon(A)$, then $\lcm(x^{\alpha},x^{\beta})=x^{\gamma}\in \Mon(A)$, where $\gamma=(\gamma_1,\dots,\gamma_n)$ with $\gamma_i:=\max\{\alpha_i,\beta_i\}$ for each $1\leq i\leq n$.
\end{itemize}
\end{remark}

\subsection{Monomial orders on $\Mon(A^m)$}
We will often represent the elements of $A^m$ also as row vectors, in case when if this does not causa confusion. We recall that the canonical basis of the free $A$-module $A^m$ is
\begin{center}
$\textbf{\emph{e}}_1=(1,0,\dots
,0),\textbf{\emph{e}}_2=(0,1,0,\dots, 0),\dots
,\textbf{\emph{e}}_m=(0,0,\dots ,1)$.
\end{center}
\begin{definition}
A monomial in $A^m$ is a vector $\bfe{X}=X\bfe{e}_i$, where
$X=x^{\alpha}\in \Mon(A)$ and $1\leq i\leq m$, i.e.,\vspace*{-1mm}
\begin{center}
$\bfe{X}=X\bfe{e}_i=(0,\dots ,X,\dots ,0)$,
\end{center}
where $X$ is in the $i$-th position, named the index of
$\bfe{X}$, $\ind(\bfe{X}):=i$. A term is a vector
$c\bfe{X}$, where $c\in R$. The set of monomials of $A^m$ will
be denoted by $\Mon(A^m)$. Let
$\bfe{Y}=Y\bfe{e}_j\in \Mon(A^m)$, we say that $\bfe{X}$
divides $\bfe{Y}$ if $i=j$ and $X$ divides $Y$. We will say
that any monomial $\bfe{X}\in \Mon(A^m)$ divides the null vector
$\bfe{\emph{0}}$. The least common multiple of $\bfe{X}$ and
$\bfe{Y}$, denoted by $\lcm(\bfe{X},\bfe{Y})$, is
$\bfe{\emph{0}}$ if $i\neq j$, and $U\bfe{e}_i$, where
$U=\lcm(X,Y)$, if $i=j$. Finally, we define
\begin{center}
$\exp(\bfe{X}):=\exp(X)=\alpha$ and
$\deg(\bfe{X}):=\deg(X)=|\alpha|$.
\end{center}
\end{definition}
\eject

\noindent We now define monomial orders on $\Mon(A^m)$.
\begin{definition}\label{DefOrderMon}
A monomial order on $\Mon(A^m)$ is a total order $\succeq$
satisfying the following three conditions:
\begin{enumerate}
\item[\rm(i)] $\lm(x^{\beta}x^{\alpha})\bfe{e}_{i}\succeq x^{\alpha}\bfe{e}_{i}$, for every
monomial $\bfe{X}=x^{\alpha}\bfe{e}_{i}\in \Mon(A^{m})$ and
any monomial $x^{\beta}$ in $\Mon(A)$.
\item[\rm(ii)] If $\bfe{Y}=x^{\beta}\bfe{e}_{j}\succeq \bfe{X}=x^{\alpha}\bfe{e}_{i}$, then
$\lm(x^{\gamma}x^{\beta})\bfe{e}_{j}\succeq
\lm(x^{\gamma}x^{\alpha})\bfe{e}_{i}$ for every monomial $x^{\gamma}\in
\Mon(A)$.
\item[\rm (iii)]$\succeq$ is degree compatible, i.e., $\deg(\bfe{X})\geq \deg(\bfe{Y})\Rightarrow \bfe{X}\succeq
\bfe{Y}$.
\end{enumerate}
If $\bfe{X}\succeq \bfe{Y}$ and $\bfe{X}\neq \bfe{Y}$ we write $\bfe{X}\succ \bfe{Y}$.
$\bfe{Y}\preceq\bfe{X}$ means that $\bfe{X}\succeq\bfe{Y}$.
\end{definition}
Definition \ref{DefOrderMon} implies that every monomial order on $\Mon(A^m)$ is a well order. Next we give a monomial order $\succeq$ on $\Mon(A)$, we can define two natural orders on $\Mon(A^m)$.
\begin{definition}
Let $\bfe{X}=X\bfe{e}_i$ and $\bfe{Y}=Y\bfe{e}_j\in\Mon(A^m)$.
\begin{enumerate}
\item [\rm(i)] The TOP {\rm(}term over position{\rm)} order is defined by
\quad $\bfe{X}\succeq\bfe{Y}\Longleftrightarrow
\begin{cases} X\succeq Y & \\
\text{or} & \\
X=Y \text{and}& i>j.
\end{cases}$
\item[\rm(ii)] The TOPREV order is defined by\quad
$\bfe{X}\succeq\bfe{Y}\Longleftrightarrow
\begin{cases} X\succeq Y & \\
\text{or} & \\
X=Y \text{and}& i<j.
\end{cases}$
\end{enumerate}
\end{definition}
\begin{remark}\hfill
\begin{enumerate}
\item[\rm (i)] Note that with TOP we have
$\textbf{\emph{e}}_m\succ \textbf{\emph{e}}_{m-1}\succ\cdots\succ\textbf{\emph{e}}_1
$ and $\textbf{\emph{e}}_1\succ \textbf{\emph{e}}_{2}\succ\cdots\succ\textbf{\emph{e}}_m$
for TOPREV.
\item[\rm (ii)] The POT (position over term) and POTREV  orders defined in \cite{Loustaunau} and \cite{Lezama2} for modules over classical polynomial commutative rings are not degree compatible.
\end{enumerate}
\end{remark}
We fix a monomial order on $\Mon(A)$ and a non-zero vector $\textbf{\emph{f}}\in A^m.$ Then we write $\textbf{\emph{f}}$ as a sum of terms in the following form
\[
\textbf{\emph{f}}=c_1\textbf{\emph{X}}_1+\cdots+c_t\textbf{\emph{X}}_t,
\]
where $c_1,\dots ,c_t\in R-0$ and $\textbf{\emph{X}}_1\succ\textbf{\emph{X}}_2\succ\cdots\succ\textbf{\emph{X}}_t$ are monomials of $\Mon(A^m)$.
\begin{definition}\label{DefRepresentMon}
Let $\bfe{f}:=c_1\textbf{\emph{X}}_1+\cdots+c_t\textbf{\emph{X}}_t\in A^{m}$ where $c_1,\dots ,c_t\in R-0,$ $\textbf{\emph{X}}_1\succ\textbf{\emph{X}}_2\succ\cdots\succ\textbf{\emph{X}}_t$ monomials of $\Mon(A^m)$ and $\bfe{X}_i:=x^{\gamma_i}\bfe{e}_{j_i}$ with $\gamma_i\in\mathbb{N}^{n}$. Given $g\in A$, we define
\[
\bfe{f}g:=c_1x^{\gamma_{1}}g\bfe{e}_{j_1}+\cdots+c_tx^{\gamma_{t}}g\bfe{e}_{j_t}
\]
and we view $fg$ as an element of $A^{m}.$
\end{definition}
\begin{remark}
In the notation of Definition \ref{DefRepresentMon}, we have
$\exp(\lm(\bfe{f}x^{\alpha}))=\exp(\lm(\bfe{f}))+\alpha.$ In fact, as $\succ$ is monomial order on $\Mon(A^{m})$, then $\lm(x^{\gamma_1}x^{\alpha})\bfe{e}_{j_1}\succ\lm(x^{\gamma_k}x^{\alpha})\bfe{e}_{j_k}$ for each $2\leq k\leq t,$ thus, $\lm(\bfe{f}x^{\alpha})=\lm(x^{\gamma_1}x^{\alpha})\bfe{e}_{j_1}$ so, $\exp(\lm(\bfe{f}x^{\alpha}))=\gamma_1+\alpha=\exp(\lm(\bfe{f}))+\alpha.$ Hence, $\lc(\bfe{f}x^{\alpha})=c_1c_{\gamma_1,\alpha}=\lc(\bfe{f})c_{\gamma_1,\alpha}$.
\end{remark}
\begin{definition}
Under the notation introduced earlier, say that:
\begin{enumerate}
\itemsep=0.9pt
\item[\rm(i)]$\lt(\bfe{f}):=c_1\bfe{X}_1$ is the leading term of $\bfe{f},$
\item[\rm(ii)]$\lc(\bfe{f}):=c_1$ is the leading coefficient of $\bfe{f},$
\item[\rm(iii)]$\lm(\bfe{f}):=\bfe{X}_1$ is the leading monomial of $\bfe{f}$.
\end{enumerate}
\end{definition}
For $\bfe{\emph{f}}=\bfe{0}$ we define
$\lm(\bfe{0})=\bfe{0},\lc(\bfe{0})=0,\lt(\bfe{0})=\bfe{0}$, and if $\succeq$ is a
monomial order on $\Mon(A^m)$, then we define $\bfe{X}\succ\bfe{0}$ for any $\bfe{X}\in
\Mon(A^m)$. So, we extend $\succeq$ to $\Mon(A^m)\bigcup\{\bfe{0}\}$.

\section{Right reduction in $A^m$}

In this section we present the fundamental topics of reduction theory for right submodules of the free $A$-module $A^m$ when $A$ is a bijective skew $PBW$ extension. This theory was studied in the bijective general case for left modules. Here we adapt the ideas used in \cite{Fajardo3}.

Throughout are assume that $R$ satisfies some natural computational conditions.
\begin{definition}\label{RGSring}
A ring $R$ is right Gröbner soluble {\rm(}$RGS${\rm)} if the following conditions hold:
\begin{enumerate}
\itemsep=0.9pt
\item[\rm (i)]$R$ is right Noetherian.
\item[\rm (ii)]Given $a,r_1,\dots,r_m\in R$ there exists an algorithm which decides whether $a$ is in the right ideal $r_1R+\cdots+r_mR$, and if so, find $b_1,\dots,b_m\in R$ such that
    $a=r_1b_1+\cdots+r_mb_m$.
\item[\rm (iii)]Given $r_1,\dots,r_m\in R$ there exists an algorithm which finds a finite set of generators of the right $R$-module
    \[
    \Syz_R^{r}[r_1\ \cdots \ r_m]:=\{(b_1,\dots,b_m)\in R^m|r_1b_1+\cdots+r_mb_m=0\}.
    \]
\end{enumerate}
\end{definition}
\begin{remark}\label{LGS}
The three conditions {\rm(i) - (iii)}imposed on $R$ are needed in order to guarantee a right Gröbner theory in the
rings of coefficients, in particular, to have an effective solution of the membership problem in
$R$ (see (ii) in Definition \ref{reductionformodules} below). From now on in this paper we
will assume that $A=\sigma(R)\langle x_1,\dots,x_n\rangle$ is a skew $PBW$ extension of $R$, where
$R$ is a $RGS$ ring and $\Mon(A)$ is endowed with some monomial order.
\end{remark}

The reduction process in $A^m$ is defined as follows.
\begin{definition}\label{reductionformodules}
Let $F$ be a finite set of non-zero vectors of $A^m$, and let $\bfe{f},\bfe{h}\in A^m$, we say that $\bfe{f}$ reduces to $\bfe{h}$ by $F$ in one step, denoted $\bfe{f}\xmapsto{\,\, F\,\, } \bfe{h}$, if there exist elements $\bfe{f}_1,\dots,\bfe{f}_t\in F$ and $r_1,\dots,r_t\in R$ such that
\begin{enumerate}
\item[\rm(i)] $\lm(\bfe{f}_i)|\lm(\bfe{f})$, $1\leq i\leq t$, i.e.,
$\ind(\lm(\bfe{f}_{i}))=\ind(\lm(\bfe{f}))$ and there exists $x^{\alpha_{i}}\in \Mon(A)$ such that $\beta_i+\alpha_{i}=\exp(\lm(\bfe{f}))$ with $\beta_i:=\exp(\lm(\bfe{f}_i))$.
\eject
\item[\rm(ii)] $ \lc(\bfe{f})=\lc(\bfe{f}_1)\sigma^{\beta_1}(r_1)c_{\bfe{f}_1,\alpha_1}+
\cdots+\lc(\bfe{f}_t)\sigma^{\alpha_t}(r_t)c_{\bfe{f}_t,\alpha_t}$, where
$c_{\bfe{f}_i,\alpha_i}:=c_{\beta_i,\alpha_i}$.
\item[\rm (iii)]$\bfe{h}=\bfe{f}-\sum_{i=1}^t\bfe{f}_ir_ix^{\alpha_i}$.
\end{enumerate}
We say that $\bfe{f}$ reduces to $\bfe{h}$ by $F$, denoted
$\bfe{f}\xmapsto{\,\, F\,\, }_{+} \bfe{h}$, if and only
if there exist vectors $\bfe{h}_1,\dots ,\bfe{h}_{t-1}\in
A^m$ such that

$$\bfe{f}\ \xmapsto{\ \ \ F\ \ \ }\ \bfe{h}_1\xmapsto{\ \ \ F\ \ \ }\ \bfe{h}_2\ \xmapsto{\ \ \ F\ \ \ }\ \cdots\ \xmapsto{\ \ \ F\ \ \ }\ \bfe{h}_{t-1}\ \xmapsto{\ \ \  F\ \ \ }\ \bfe{h}.$$

\smallskip

\noindent
$\bfe{f}$ is reduced {\rm(}also called minimal{\rm)} with respect to $F$
if $\bfe{f} = \bfe{0}$ or there is no one step
reduction  of $\bfe{f}$ by $F$, i.e., one of the first two
conditions of Definition \ref{reductionformodules} fails.
Otherwise, we will say that $\bfe{f}$ is reducible with respect to $F$.
If $\bfe{f}\xmapsto{\,\, F\,\, }_{+} \bfe{h}$ and
$\bfe{h}$ is reduced with respect to $F$, then we say that $\bfe{h}$
is a remainder for $\bfe{f}$ with respect to $F$.
\end{definition}
\begin{remark}
Related to the previous definition we have the following remarks:
\begin{enumerate}
\item[\rm (i)] By Theorem \ref{coefficientes}, the coefficients $c_{\bfe{f}_i,\alpha_i}$ in the previous definition are unique and satisfy
    \[
    x^{\exp(\lm(\textbf{\emph{f}}_i))}x^{\alpha_i}=
    c_{\textbf{\emph{f}}_i,\alpha_i}
    x^{\exp(\lm(\textbf{\emph{f}}_i))+\alpha_i}+
    p_{\textbf{\emph{f}}_i,\alpha_i},
    \]
    where $p_{\textbf{\emph{f}}_i,\alpha_i}=0$ or
    $\deg(\lm(p_{\textbf{\emph{f}}_i,\alpha_i}))<
    |\exp(\lm(\textbf{\emph{f}}_i))+\alpha_i|$, $1\leq i\leq t$.
\item[\rm (ii)] $\lm(\textbf{\emph{f}})\succ \lm(\textbf{\emph{h}})$ and $\textbf{\emph{f}}-\textbf{\emph{h}}\in \langle F\rangle$, where $\langle F\rangle$ is the right submodule of $A^m$ generated by $F$.
\item[\rm (iii)] The remainder of $\textbf{\emph{f}}$ is not unique.
\item[\rm (vi)] By definition we will assume that $\textbf{0}\xmapsto{F}\textbf{0}$.
\item[\rm (v)] \vspace*{-3mm}
$$ \lt(\textbf{\emph{f}})= \sum_{i=1}^{t}\lt(\lt(\textbf{\emph{f}}_i)r_i x^{\alpha_{i}}),$$
\end{enumerate}
\end{remark}

From the reduction relation we obtain the following interesting properties.

\begin{proposition}\label{xtetafformodules}
Assume that $A$ is a bijective skew $PBW$ extension. Let $\bfe{f},\bfe{h}\in A^{m}$, $\theta \in \mathbb{N}^n$ and $F=\{\bfe{f}_1,\dots,\bfe{f}_t\}$ be a finite set of non-zero vectors of $A^{m}$.

\begin{enumerate}
\item[\rm{(i)}]If $\bfe{f}\xmapsto{\,\, F\,\, }\bfe{h}$, then there exists $\bfe{p}\in
A^{m}$ with $\bfe{p}=\bfe{0}$ or $\lm(\bfe{f}x^{\theta})\succ \lm\bfe{(p})$ such that
$\bfe{f}x^{\theta}+\bfe{p}\xmapsto{\,\, F\,\, } \bfe{h}x^{\theta}$.
\item[\rm{(ii)}]If $\bfe{f}\xmapsto{\,\, F\,\, }_+\bfe{h}$ and $\bfe{p}\in A$ is such that $\bfe{p}=\bfe{0}$ or $\lm(\bfe{h})\succ \lm(\bfe{p})$, then
$\bfe{f}+\bfe{p}\xmapsto{\,\, F\,\, }_+ \bfe{h}+\bfe{p}$.
\item[\rm{(iii)}]If $\bfe{f}\xmapsto{\,\, F\,\, }_+ \bfe{h}$, then there
exists $\bfe{p}\in A^{m}$ with $\bfe{p}=\bfe{0}$ or $\lm(\bfe{f}x^{\theta})\succ \lm(\bfe{p})$ such
that $\bfe{f}x^{\theta}+\bfe{p}\xmapsto{\,\, F\,\, }_+ x^{\theta}\bfe{h}$.
\item[\rm{(iv)}]If $\bfe{f}\xmapsto{\,\, F\,\, }_+ \bfe{0}$, then there
exists $\bfe{p}\in A^{m}$ with $\bfe{p}=\bfe{0}$ or $\lm(\bfe{f}x^{\theta})\succ \lm(\bfe{p})$ such
that $\bfe{f}x^{\theta}+\bfe{p}\xmapsto{\,\, F\,\, }_+ \bfe{0}$.
\end{enumerate}\smallskip
\end{proposition}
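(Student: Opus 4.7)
The plan is to prove (i) by a direct construction, then derive (ii) by a short one-step argument plus induction on chain length, derive (iii) by combining (i) and (ii) inductively, and obtain (iv) as the case $\bfe{h}=\bfe{0}$ of (iii).

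For (i), start from the one-step relation $\bfe{h}=\bfe{f}-\sum_{i=1}^{t}\bfe{f}_{i}r_{i}x^{\alpha_{i}}$ and right-multiply by $x^{\theta}$. Using the skew $PBW$ factorization $x^{\alpha_{i}}x^{\theta}=c_{\alpha_{i},\theta}x^{\alpha_{i}+\theta}+p_{\alpha_{i},\theta}$ from Theorem \ref{coefficientes} (with $p_{\alpha_{i},\theta}=0$ or $\deg(p_{\alpha_{i},\theta})<|\alpha_{i}+\theta|$), one gets
\[
\bfe{h}x^{\theta}=\bfe{f}x^{\theta}-\sum_{i=1}^{t}\bfe{f}_{i}(r_{i}c_{\alpha_{i},\theta})x^{\alpha_{i}+\theta}-\sum_{i=1}^{t}\bfe{f}_{i}r_{i}p_{\alpha_{i},\theta}.
\]
Setting $\bfe{p}:=-\sum_{i}\bfe{f}_{i}r_{i}p_{\alpha_{i},\theta}$, $r'_{i}:=r_{i}c_{\alpha_{i},\theta}$, and $\alpha'_{i}:=\alpha_{i}+\theta$, I would verify that $\bfe{f}x^{\theta}+\bfe{p}\xmapsto{\,F\,}\bfe{h}x^{\theta}$ is a legitimate one-step reduction via these data. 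Divisibility is immediate from $\exp(\lm(\bfe{f}x^{\theta}))=\exp(\lm(\bfe{f}))+\theta=\beta_{i}+\alpha'_{i}$ and $\ind(\lm(\bfe{f}x^{\theta}))=\ind(\lm(\bfe{f}))$. The leading-coefficient requirement amounts to
\[
\lc(\bfe{f})\,c_{\exp(\lm(\bfe{f})),\theta}=\sum_{i=1}^{t}\lc(\bfe{f}_{i})\,\sigma^{\beta_{i}}(r'_{i})\,c_{\beta_{i},\alpha'_{i}},
\]
which I would obtain by expanding $\sigma^{\beta_{i}}(r_{i}c_{\alpha_{i},\theta})=\sigma^{\beta_{i}}(r_{i})\sigma^{\beta_{i}}(c_{\alpha_{i},\theta})$, applying the identity $\sigma^{\beta_{i}}(c_{\alpha_{i},\theta})c_{\beta_{i},\alpha_{i}+\theta}=c_{\beta_{i},\alpha_{i}}c_{\beta_{i}+\alpha_{i},\theta}$ of Remark \ref{identities}(i), factoring out the common constant $c_{\exp(\lm(\bfe{f})),\theta}$, and invoking the original identity for $\lc(\bfe{f})$. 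The bound $\lm(\bfe{f}x^{\theta})\succ\lm(\bfe{p})$ follows from the degree estimate $\deg(\bfe{f}_{i}r_{i}p_{\alpha_{i},\theta})\leq |\beta_{i}|+\deg(p_{\alpha_{i},\theta})<|\beta_{i}+\alpha_{i}+\theta|=\deg(\lm(\bfe{f}x^{\theta}))$ and degree compatibility of $\succeq$.

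For (ii), a short observation suffices: leading monomials strictly decrease along any reduction chain $\bfe{f}=\bfe{h}_{0}\xmapsto{\,F\,}\cdots\xmapsto{\,F\,}\bfe{h}_{s}=\bfe{h}$, so $\lm(\bfe{h})\succ\lm(\bfe{p})$ forces $\lm(\bfe{h}_{i})\succ\lm(\bfe{p})$ for every $i$; consequently adding $\bfe{p}$ changes neither the leading term nor the witness data of $\bfe{h}_{i}$, and the same $\bfe{f}_{i},r_{i},\alpha_{i}$ reduce $\bfe{h}_{i}+\bfe{p}$ to $\bfe{h}_{i+1}+\bfe{p}$. Part (iii) then follows by induction on chain length: apply (i) to the first step to obtain $\bfe{p}_{1}$ with $\bfe{f}x^{\theta}+\bfe{p}_{1}\xmapsto{\,F\,}\bfe{h}_{1}x^{\theta}$; by the inductive hypothesis find $\bfe{p}_{2}$ with $\bfe{h}_{1}x^{\theta}+\bfe{p}_{2}\xmapsto{\,F\,}_{+}\bfe{h}x^{\theta}$; use (ii) to insert $\bfe{p}_{2}$ into the first step, yielding $\bfe{f}x^{\theta}+(\bfe{p}_{1}+\bfe{p}_{2})\xmapsto{\,F\,}_{+}\bfe{h}x^{\theta}$, with $\bfe{p}_{1}+\bfe{p}_{2}$ still dominated by $\lm(\bfe{f}x^{\theta})$. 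Part (iv) is (iii) specialized to $\bfe{h}=\bfe{0}$.

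The main obstacle, and really the only nontrivial computation, lies in the leading-coefficient verification for (i): bridging the constant $c_{\beta_{i}+\alpha_{i},\theta}$ that arises from $\lc(\bfe{f}x^{\theta})$ with the constant $c_{\beta_{i},\alpha_{i}+\theta}$ that appears in the new reduction data is exactly what the cocycle identity of Remark \ref{identities}(i) accomplishes, and bijectivity of $A$ (Remark \ref{identities}(ii)) is used implicitly to ensure the structure constants $c_{\cdot,\cdot}$ are invertible.
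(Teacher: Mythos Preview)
Your proposal is correct and follows essentially the same route as the paper: the same correction term $\bfe{p}=-\sum_i \bfe{f}_i r_i p_{\alpha_i,\theta}$ and new coefficients $r_i'=r_i c_{\alpha_i,\theta}$ in (i), the same cocycle identity from Remark~\ref{identities}(i) to match leading coefficients, the same ``adding a small tail does not disturb the reduction data'' argument for (ii), and the same inductive combination of (i) and (ii) for (iii), with (iv) as the specialization $\bfe{h}=\bfe{0}$. The only cosmetic difference is that the paper unrolls the first two steps of (iii) explicitly before invoking induction, whereas you phrase it as a clean induction on chain length; also, note that invertibility of the $c_{\alpha,\beta}$ is not actually invoked in the leading-coefficient computation for (i) --- the cocycle identity alone suffices there.
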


\begin{proof}

\vspace*{-7mm}
\begin{enumerate}
\item[\rm (i)] If $\bfe{f}=\bfe{0}$, then $\bfe{h}=\bfe{0}=\bfe{p}$. Let $\bfe{f}\neq\bfe{0}$ and $\lm(\bfe{f}):=x^{\lambda}$; then there exist $\bfe{f}_1,\dots,\bfe{f}_t\in F$ and $r_1\dots,r_t\in R$ such that $\lm(\bfe{f}_i)|\lm(\bfe{f})$, for $1\leq i\leq t,$ i.e., $\ind(\lm(\bfe{f}_i))=\ind(\lm(\bfe{f}))$ and there exists $x^{\alpha_i}\in \Mon(A)$ such that
    $\lambda=\alpha_i+\exp(\lm(\bfe{f}_i))$. Moreover,
    \[
    \lc(\bfe{f})=\lc(\bfe{f}_1)\sigma^{\beta_1}(r_1)c_{\beta_1,\alpha_1}+
    \cdots +\lc(\bfe{f}_t)\sigma^{\beta_t}(r_t)c_{\beta_t,\alpha_t}
    \]
    with $\beta_i:=\exp(\lm(\bfe{f}_i))$ and $\bfe{h}=\bfe{f}-\sum_{i=1}^t\bfe{f}_ir_ix^{\alpha_i}.$
    We note that $\ind(\lm(\bfe{f}))=\ind(\lm(\bfe{f}x^{\theta}))$ and $\exp(\bfe{f}x^{\theta})=\theta+\lambda$, so
\[
\lm(\bfe{f}_i)|\lm(\bfe{f}x^{\theta}),\text{ with } \theta+\lambda=(\theta+\alpha_i)+\beta_i;
\]
we observe that
\[
\lc(\bfe{f}x^{\theta})=\lc(\bfe{f})c_{\lambda,\theta}=
\sum_{i=1}^t \lc(\bfe{f}_i)\sigma^{\beta_i}(r_i)c_{\beta_i,\alpha_i}c_{\lambda,\theta}.
\]
Hence Remark \ref{identities} yields:\vspace*{-1mm}
\begin{align*}
\lc(\bfe{f}x^{\theta}) &= \sum_{i=1}^t \lc(\bfe{f}_i)\sigma^{\beta_i}(r_i)c_{\beta_i,\alpha_i}
c_{\alpha_i+\beta_i,\theta}\\
&=\sum_{i=1}^tlc(\bfe{f}_i)\sigma^{\beta_i}(r_i)
\sigma^{\beta_i}(c_{\alpha_i,\theta})c_{\beta_i,\alpha_i+\theta}\\
&=\sum_{i=1}^tlc(\bfe{f}_i)\sigma^{\beta_i}
(r_ic_{\alpha_i,\theta})c_{\beta_i,\alpha_i+\theta}\\
&=\sum_{i=1}^tlc(\bfe{f}_i)\sigma^{\beta_i}(r_i')c_{\beta_i,\alpha_i+\theta},
\end{align*}
where $r_i':=r_ic_{\alpha_i,\theta}$. Moreover,
\begin{align*}
\bfe{h}x^{\theta}&=\bfe{f}x^{\theta}-\sum_{i=1}^t
\bfe{f}_ir_ix^{\alpha_i}x^{\theta}\\
&=\bfe{f}x^{\theta}-\sum_{i=1}^t\bfe{f}_ir_ic_{\alpha_i,\theta}
x^{\alpha_i+\theta}+\bfe{p}\\
&=\bfe{f}x^{\theta}+\bfe{p}-\sum_{i=1}^t\bfe{f}_ir_i'x^{\alpha_i+\theta}
\end{align*}
where $\bfe{p}:=\sum_{i=1}^t(-\bfe{f}_i)r_ip_{\alpha_i,\theta}$; note that $\bfe{p}=\bfe{0}$ or $\deg(\bfe{p})<|\theta+\alpha_i+\beta_i|=|\theta+\lambda|=
deg(\bfe{f}x^{\theta})$, so $\lm(\bfe{f}x^{\theta})\succ \lm(\bfe{p})$. Moreover, $\lm(\bfe{f}x^{\theta}+\bfe{p})=\lm(\bfe{f}x^{\theta})$ and
$\lc(\bfe{f}x^{\theta}+\bfe{p})=\lc(\bfe{f}x^{\theta})$, so by the previous discussion
$x^{\theta}\bfe{f}+\bfe{p}\xmapsto{\,\, F\,\, } x^{\theta}\bfe{h}$.
\item[\rm (ii)] Let
\begin{equation}\label{f+p}
\bfe{f}\ \xmapsto{\ \ \ F\ \ \ }\ \bfe{h}_1\ \xmapsto{\ \ \ F\ \ \ }\ \bfe{h}_2\ \xmapsto{\ \ \ F\ \ \ }\ \cdots\ \xmapsto{\ \ \ F\ \ \ }\ \bfe{h}_{t-1}\ \xmapsto{\ \ \ F\ \ \ }\ \bfe{h}_t:=\bfe{h}.
\end{equation}
We start with $f\xmapsto{F} \bfe{h}_1.$ If $\bfe{f}=\bfe{0}$, then $\bfe{h}_1=\bfe{0}=\bfe{p}$ and there is nothing to prove. Let $\bfe{f}\neq \bfe{0}.$ If $\bfe{h}_1=\bfe{0}$ then $\bfe{p}=\bfe{0}$ and hence $\lm(\bfe{f})\succ \lm(\bfe{p})$; if $\bfe{h}_1\neq \bfe{0} $, then $\lm(\bfe{f})\succ \lm(\bfe{h}_1)\succ \lm(\bfe{p})$, and hence $\lm(\bfe{f}+\bfe{p})=\lm(\bfe{f})$, $\lc(\bfe{f}+\bfe{p})=\lc(\bfe{f}).$ Now, as in the proof of the first part of (i), we obtain $\bfe{h}_1+\bfe{p}=\bfe{f}+\bfe{p}-\sum_{i=1}^t\bfe{f}_ir_ix^{\alpha_i}.$ Since $\lm(\bfe{f}+\bfe{p})=\lm(\bfe{f})$ and $\lc(\bfe{f}+\bfe{p})=\lc(\bfe{f})$, then $\bfe{f}+\bfe{p}\xmapsto{\,\, F\,\, } \bfe{h}_1+\bfe{p}$. Since $\lm(\bfe{h}_i)\succ \lm(\bfe{p})$ we can repeat this procedure for $\bfe{h}_i\xmapsto{\,\, F\,\, } \bfe{h}_{i+1}$ with $1\leq i\leq t-1$. This completes the proof of (ii).
\item[\rm (iii)] By (i) and (\ref{f+p}), there exists $\bfe{p}_1\in A^m$ with $\bfe{p}_1=\bfe{0}$ or $\lm(\bfe{f}x^{\theta})\succ \lm(\bfe{p}_1)$ such that $\bfe{f}x^{\theta}+\bfe{p}_1\xmapsto{\,\, F\,\, } \bfe{h}_1x^{\theta}.$ Moreover there exists $\bfe{p}_2\in A^m$ with $\bfe{p}_2=0$ or $\lm(\bfe{h}_1x^{\theta})\succ \lm(\bfe{p}_2)$
such that $\bfe{h}_1x^{\theta}+\bfe{p}_2\xmapsto{\,\, F\,\, }\bfe{h}_2x^{\theta}.$ Hence, in view of (ii), we obtain $\bfe{f}x^{\theta}+\bfe{p}_1+\bfe{p}_2\xmapsto{\,\, F\,\, }\bfe{h}_1x^{\theta}+\bfe{p}_2\xmapsto{\,\, F\,\, } \bfe{h}_2x^{\theta}$, so the element $\bfe{p}'':=\bfe{p}_1+\bfe{p}_2\in A^m$ is such that\vspace*{-1mm}
\begin{center}
$\bfe{f}x^{\theta}+\bfe{p}''\xmapsto{\,\, F\,\, }_+ \bfe{h}_2x^{\theta}$,
\end{center}
with $\bfe{p}''=0$ or $\lm(\bfe{f}x^{\theta})\succ \lm(\bfe{p}''),$ because we have $\lm(\bfe{f}x^{\theta})\succ \lm(\bfe{p}_1)$ and $\lm\bfe{f}(x^{\theta})\succ \lm(\bfe{p}_2)$. By induction on $t$ we find $\bfe{p}'\in A^m$ such that\vspace*{-1mm}
\begin{center}
$\bfe{f}x^{\theta}+\bfe{p}'\xmapsto{\,\, F\,\, }_+h_{t-1}x^{\theta}$,
\end{center}
with $\bfe{p}'=0$ or $\lm(\bfe{f}x^{\theta})\succ \lm(\bfe{p}')$. By (i) there exists $\bfe{p}_t\in A^m$ such that $\bfe{h}_{t-1}x^{\theta}+\bfe{p}_t\xmapsto{\,\, F\,\,}hx^{\theta}$, with $\bfe{p}_t=\bfe{0}$ or $\lm(\bfe{h}_{t-1}x^{\theta})\succ \lm(\bfe{p}_t)$. By (ii), $\bfe{f}x^{\theta}+\bfe{p}'+\bfe{p}_t\xmapsto{\,\, F\,\, }_+
\bfe{h}_{t-1}x^{\theta}+\bfe{p}_t\xmapsto{\,\, F\,\, }\bfe{h}x^{\theta}$. Thus,
\begin{center}
$\bfe{f}x^{\theta}+\bfe{p}\xmapsto{\,\, F\,\, }_+\bfe{h}x^{\theta}$,
\end{center}
with $\bfe{p}:=\bfe{p}'+\bfe{p}_t=0$ or $\lm(\bfe{f}x^{\theta})\succ \lm(\bfe{p})$ since $\lm(\bfe{f}x^{\theta})\succ \lm(\bfe{p}')$ and $\lm(\bfe{f}x^{\theta})\succ \lm(\bfe{p}_t)$.
\item[\rm (iv)] This is a direct consequence of (iii) taking $\bfe{h}=\bfe{0}$.
\end{enumerate}

\vspace*{-9mm}
\end{proof}\smallskip

\begin{definition}\label{definicionPsi}
Let $A:=\sigma(R)\langle x_1,\ldots,x_n\rangle$ a bijective skew $PBW$ extension. Let $\theta_1,\theta_2\in\mathbb{N}^{n}$. We define the following automorphism over $R$, $\psi_{\theta_1,\theta_2}:R\to R$ that assigns to each $r\in R$.
\[
\psi_{\theta_1,\theta_2}(r):=\sigma^{\theta_1+\theta_2}
(\sigma^{-\theta_2}(r)).
\]
\end{definition}

\begin{remark}\label{defPsi}\hfill
\begin{enumerate}
\item[\rm (i)] The inverse function of $\psi$ is given by $\psi_{\theta_1,\theta_2}^{-1}(r)=\sigma^{\theta_2}
    \sigma^{-(\theta_1+\theta_2)}(r).$
\item[\rm (ii)] Let $A:=\sigma(R)\langle x_1,\ldots,x_n\rangle$ a bijective skew $PBW$ extension. For $\alpha,\beta,\gamma\in\mathbb{N}^{n}$ and $r\in R$, using the identities of Remark \ref{identities}, we obtain
    \begin{align}\label{id1Psi}
    \sigma^{\beta}(r)c_{\beta,\alpha} &= c_{\beta,\alpha}\psi_{\beta,\alpha}(r)\\\label{id2Psi}
    c_{\beta,\alpha}r &=\sigma^{\beta}(\psi^{-1}_{\beta,\alpha}(r))c_{\beta,\alpha}.
    \end{align}
    \eject
    Moreover, we have
\begin{equation}\label{identitypsi}
\begin{split}
c_{\beta,\alpha}rc_{\beta+\alpha,\gamma}
&= \sigma^{\beta}(\psi^{-1}_{\beta,\alpha}(r))
c_{\beta,\alpha}c_{\alpha+\beta,\gamma}\\
&=\sigma^{\beta}(\psi^{-1}_{\beta,\alpha}(r))
\sigma^{\beta}(c_{\alpha,\gamma})c_{\beta,\alpha+\gamma}\\
&=\sigma^{\beta}(\psi^{-1}_{\beta,\alpha}(r)c_{\alpha,\gamma})
c_{\beta,\alpha+\gamma}\\
&=c_{\beta,\alpha+\gamma}
\psi_{\beta,\alpha+\gamma}(\psi^{-1}_{\beta,\alpha}(r)c_{\alpha,\gamma}).
\end{split}
\end{equation}
\item[\rm (iii)] Under the notation used in proof of Proposition \ref{xtetafformodules} {\rm (i)}; $s_1,\ldots,s_t$ are solutions of the equation
\[
\lc(\bfe{h})=\sum_{i=1}^{t}\lc(\bfe{f}_i)c_{\beta_i,\alpha_i}s_i,
\]
if and only if, $r_i=\psi_{\alpha_i,\beta_i}^{-1}(s_i)$ for $i=1,\ldots,t,$ are solutions of the equation
\[
\lc(\bfe{h})=\sum_{i=1}^{t}\lc(\bfe{f}_i)c_{\beta_i,\alpha_i}
\psi_{\beta_i,\alpha_i}(r_i).
\]
\end{enumerate}
\end{remark}

\noindent The following theorem is a theoretical support of the right Division Algorithm (Algorithm \ref{algoDivisioninAm}) for bijective skew $PBW$ extensions.

\begin{algorithm}[h!]\small
\caption{Right division algorithm in $A^{m}$}
\KwIn{$\bfe{f},\bfe{f}_1,\dots ,\bfe{f}_t\in A^{m}\text{ with }\bfe{f}_j\neq \bfe{0}\, (1\leq j\leq t)$}
\KwOut{$q_1,\dots ,q_t\in A,\,\bfe{h}\in A^{m}\,\,\text{with}\,\, \bfe{f}=\bfe{f}_1q_1+\cdots
+\bfe{f}_tq_t+\bfe{h}$, $\bfe{h}$ reduce with respect to $\{\bfe{f}_1,\dots ,\bfe{f}_t\}$ and
$\lm(\bfe{f})=\max\{\lm(\lm(\bfe{f}_1)\lm(q_1)),\dots ,\lm(\lm(\bfe{f}_t)\lm(q_t)),\lm(\bfe{h})\}$}
\Ini $q_1\gets0,q_2\gets0,\dots ,q_t\gets0,\bfe{h}\gets \bfe{f}$\;
\While{$\bfe{h}\neq \bfe{0}$ and there exists $j$ such that $\lm(\bfe{f}_j)$ divides $\lm(\bfe{h})$}{
  $J\gets\{j\mid \lm(\bfe{f}_j)$ divides $\lm(\bfe{h})\}$;\\
  \For{$i\in J$}{
     $\beta_j\gets\exp(\lm(\bfe{f}_j))$;\\
     $\alpha_j\gets\exp(\lm(\bfe{h}))-\beta_j$;
  }
  \eIf{the equation $\lc(\bfe{h})=\sum_{j\in J}\lc(\bfe{f}_j)c_{\bfe{f}_j,\alpha_j}s_j$ is soluble}{
     Calculate one solution $(s_j)_{j\in J}$;\\
     \For{$j\in J$}{
        $r_j\gets \psi_{\beta_j,\alpha_j}^{-1}(s_j)$;\\
        $q_j\gets q_j+r_jx^{\alpha_j}$;\\
        $\bfe{h}\gets \bfe{h}-\bfe{f}_jr_jx^{\alpha_j}$;
     }
  }{
    \textbf{Break;}
  }
}
\label{algoDivisioninAm}
\end{algorithm}

\begin{theorem}\label{algdivformodules}
Let $F=\{\bfe{f}_1,\dots ,\bfe{f}_t\}$ be a set of non-zero vectors of $A^m$ and $\bfe{f}\in A^m$, then the right division algorithm (Algorithm \ref{algoDivisioninAm}) produces polynomials $q_1,\dots ,q_t\in A$ and a reduced vector $\bfe{h}\in A^m$ with respect to $F$ such that $\bfe{f}\xmapsto{\,\, F\,\, }_{+} \bfe{h}$ and
\begin{center}
$\bfe{f}=\bfe{f}_1q_1+\cdots +\bfe{f}_tq_t+\bfe{h}$
\end{center}
with
\begin{center}
$\lm(\bfe{f})=\max\{\lm(\lm(\bfe{f}_1)\lm(q_1)),\dots,
\lm(\lm(\bfe{f}_t)\lm(q_t)),\lm(\bfe{h})\}$.
\end{center}

\end{theorem}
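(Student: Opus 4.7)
The plan is to verify correctness of Algorithm \ref{algoDivisioninAm} by establishing a loop invariant, interpreting each iteration as a one-step reduction in the sense of Definition \ref{reductionformodules}, and then extracting termination and the leading monomial identity.

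First I would fix the loop invariant
\[
\bfe{f}=\bfe{f}_1q_1+\cdots+\bfe{f}_tq_t+\bfe{h}, \qquad
\lm(\bfe{f})\succeq \max_j\bigl\{\lm(\lm(\bfe{f}_j)\lm(q_j)),\lm(\bfe{h})\bigr\},
\]
which holds vacuously after the \textbf{Initialization} step. The main task is to show that each pass through the \textbf{while} loop preserves the invariant. For a fixed iteration, let $J=\{j:\lm(\bfe{f}_j)\mid\lm(\bfe{h})\}$ and write $\beta_j:=\exp(\lm(\bfe{f}_j))$, $\alpha_j:=\exp(\lm(\bfe{h}))-\beta_j$ for $j\in J$. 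The hypothesis that the equation $\lc(\bfe{h})=\sum_{j\in J}\lc(\bfe{f}_j)c_{\bfe{f}_j,\alpha_j}s_j$ has a solution, combined with Remark \ref{defPsi}(iii), shows that setting $r_j:=\psi_{\beta_j,\alpha_j}^{-1}(s_j)$ satisfies $\lc(\bfe{h})=\sum_{j\in J}\lc(\bfe{f}_j)\sigma^{\beta_j}(r_j)c_{\beta_j,\alpha_j}$. Therefore the three conditions (i)--(iii) of Definition \ref{reductionformodules} are fulfilled and the update $\bfe{h}\mapsto \bfe{h}-\sum_{j\in J}\bfe{f}_jr_jx^{\alpha_j}$ is one step of reduction by $F$; in particular $\lm(\bfe{h}')\prec\lm(\bfe{h})$.

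Next I would check that the invariant propagates. Writing $\bfe{h}'$ and $q_j':=q_j+r_jx^{\alpha_j}$ for the updated quantities, the identity $\bfe{f}=\sum_j\bfe{f}_jq_j'+\bfe{h}'$ is immediate from the substitution. For the leading monomial bound, Proposition \ref{11.1.4}(a) gives $\lm(\lm(\bfe{f}_j)\lm(r_jx^{\alpha_j}))=x^{\beta_j+\alpha_j}\bfe{e}_{\ind(\bfe{f}_j)}=\lm(\bfe{h})$, and for the terms already accumulated in $q_j$ the bound was $\lm(\bfe{f})$ by induction; since $\lm(\bfe{h})\preceq\lm(\bfe{f})$, the maximum of all these monomials together with $\lm(\bfe{h}')$ remains bounded by $\lm(\bfe{f})$.

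For termination, each iteration strictly decreases $\lm(\bfe{h})$ in the monomial order on $\Mon(A^m)$, which is a well order (noted after Definition \ref{DefOrderMon}); hence the loop halts in finitely many steps. Upon exit, either $\bfe{h}=\bfe{0}$ or no $\lm(\bfe{f}_j)$ divides $\lm(\bfe{h})$, or else the coefficient equation fails (covered by the \textbf{Break}); in all cases $\bfe{h}$ is reduced with respect to $F$ by Definition \ref{reductionformodules}. The sequence of one-step reductions accumulated by the algorithm yields $\bfe{f}\xmapsto{\,F\,}_{+}\bfe{h}$. Finally, the maximality statement $\lm(\bfe{f})=\max\{\lm(\lm(\bfe{f}_1)\lm(q_1)),\dots,\lm(\lm(\bfe{f}_t)\lm(q_t)),\lm(\bfe{h})\}$ follows because the first iteration (if any) deals with $\lm(\bfe{h})=\lm(\bfe{f})$, so the corresponding $\lm(\lm(\bfe{f}_j)\lm(q_j))$ equals $\lm(\bfe{f})$; if the loop never executes, then $\bfe{h}=\bfe{f}$ and the claim is trivial.

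The main delicate point, and the only place where bijectivity is really used, is the translation between the solvability condition in the algorithm (phrased via $c_{\bfe{f}_j,\alpha_j}s_j$) and the coefficient identity in Definition \ref{reductionformodules} (phrased via $\sigma^{\beta_j}(r_j)c_{\bfe{f}_j,\alpha_j}$); the automorphism $\psi_{\beta_j,\alpha_j}$ from Definition \ref{definicionPsi} together with identity \eqref{id1Psi} makes this passage reversible, which is exactly what allows one to turn the algorithm's solution $(s_j)$ into valid reduction coefficients $(r_j)$.
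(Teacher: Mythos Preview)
Your argument is correct and follows essentially the same approach as the paper: both interpret the algorithm as iterated one-step reduction, invoke the well-ordering of $\Mon(A^m)$ for termination, and track the leading monomials through the iterations. Your presentation is somewhat more explicit, phrasing things in terms of a loop invariant and spelling out (via Remark \ref{defPsi}(iii) and identity \eqref{id1Psi}) the role of $\psi_{\beta_j,\alpha_j}$ in passing from the algorithm's solution $(s_j)$ to the reduction coefficients $(r_j)$; the paper's proof glosses over this and simply asserts that the algorithm ``is the iteration of the reduction process''.
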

\begin{proof}
We first note that Algorithm \ref{algoDivisioninAm} is the iteration of the reduction process. If $\textbf{\emph{f}}$ is reduced with respect to $F:=\{\textbf{\emph{f}}_1,\dots ,\textbf{\emph{f}}_t\}$, then $\textbf{\emph{h}}=\textbf{\emph{f}}, q_1=\cdots =q_t=0$ and
$\lm(\textbf{\emph{f}})=\lm(\textbf{\emph{h}})$. If $\textbf{\emph{f}}$ is not reduced, then we make the first reduction, $\textbf{\emph{f}}\xmapsto{\,\, F\,\, } \textbf{\emph{h}}_1$, with $\textbf{\emph{f}}=\sum_{j\in J_1}\textbf{\emph{f}}_jr_{j1}x^{\alpha_j}+\textbf{\emph{h}}_1$, with $J_1:=\{j\mid \lm(\textbf{\emph{f}}_j)$ divides $\lm(\textbf{\emph{f}})\}$ and $r_{j1}\in R$. If
$\textbf{\emph{h}}_1$ is reduced with respect to $F$, then the cycle \textbf{While} ends and we obtain $q_j=r_{j1}x^{\alpha_j}$ for $j\in J_1$ and $q_j=0$ for $j\notin J_1$. Moreover,
$\lm(\textbf{\emph{f}})\succ \lm(\textbf{\emph{h}}_1)$ and $\lm(\textbf{\emph{f}})=\lm(\lm(\textbf{\emph{f}}_j)\lm(q_j))$ for $j\in J_1$ such that $r_{j1}\neq 0$, hence, $\lm(\textbf{\emph{f}})=\max_{1\leq j\leq t }\{\lm(\lm(\textbf{\emph{f}}_j))\lm(q_j),\lm(\textbf{\emph{h}}_1)\}$. If $\textbf{\emph{h}}_1$ is not reduced, so we make the second reduction with respect to $F$,
$\textbf{\emph{h}}_1\xmapsto{\,\, F\,\, } \textbf{\emph{h}}_2$, with $\textbf{\emph{h}}_1=\sum_{j\in J_2}\textbf{\emph{f}}_jr_{j2}x^{\alpha_j}+\textbf{\emph{h}}_2$,
$J_2:=\{j\mid \lm(\textbf{\emph{f}}_j)$ divides $\lm(\textbf{\emph{h}}_1)\}$ and $r_{j2}\in R$. We have
\begin{center}
$\textbf{\emph{f}}=\sum_{j\in J_1}\textbf{\emph{f}}_jr_{j1}x^{\alpha_j}+\sum_{j\in
J_2}\textbf{\emph{f}}_jr_{j2}x^{\alpha_j}+\textbf{\emph{h}}_2$
\end{center}
If $\textbf{\emph{h}}_2$ is reduced with respect to $F$ the procedure ends and we get $q_j=q_j$ for $j\notin J_2$ and $q_j=q_j+r_{j2}x^{\alpha_j}$ for $j\in J_2$. Since
$\lm(\textbf{\emph{f}})\succ \lm(\textbf{\emph{h}}_1)\succ \lm(\textbf{\emph{h}}_2)$, then the algorithm produces polynomials $q_j$ with monomials ordered according to the monomial
order fixed, and again we have $\lm(\textbf{\emph{f}})=\max_{1\leq j\leq t
}\{\lm(\lm(q_j)\lm(\textbf{\emph{f}}_j)),\lm(\textbf{\emph{h}}_2)\}$. If we continue this way, the algorithm ends since $\Mon(A^m)$ is well ordered.
\end{proof}

\section{Gröbner bases for right submodules of $A^m$}

\noindent
In this section we present the general theory of Gröbner bases for right submodules of $A^m$, $m\geq 1$, where $A=\sigma(R)\langle x_1,\dots,x_n\rangle$ is a bijective skew $PBW$ extension of $R$, with $R$ a $RGS$ ring (see Definition \ref{RGSring}) and $\Mon(A)$ endowed with some monomial order (see Definition \ref{monomialorder}). $A^m$ is the right free $A$-module of column vectors of length $m\geq 1$; since $A$ is a right Noetherian ring, then $A$ is an $IBN$ ring (Invariant Basis Number, see \cite{Lezama6}), and hence, all bases of the free module $A^m$ have $m$ elements. Note moreover that $A^m$ is right Noetherian, and hence, any submodule of $A^m$ is finitely generated.

The plan is to define and calculate Gröbner bases for right submodules of $A^m$, we will present some equivalent conditions in order to define right Gröbner bases, and finally, we will compute right Gröbner bases using a procedure similar to right Buchberger's algorithm over bijective skew $PBW$ extensions.
This theory was studied in the bijective general case for left modules. Here we adapt the ideas and technique used in \cite{Fajardo3}.

\medskip
Our next purpose is to define Gröbner bases for right submodules of $A^m$.

\begin{definition}
Let $M\neq 0$ be a right submodule of $A^m$ and let $G$ be a non-empty finite subset of non-zero vectors of $M$, we say that $G$ is a Gröbner basis for $M$ if each element $0\neq \bfe{f}\in M$ is reducible with respect to $G$. We will say that $\{\bfe{0}\}$ is a Gröbner basis for $M=0$.
\end{definition}

\begin{theorem}\label{teogrobnersigmapbwformodules}
Let $M\neq 0$ be a right submodule of the free $A$-module $A^{m}$ and let $G$ be a finite subset of non-zero vectors of $M$. Then the following conditions are equivalent:
\begin{enumerate}
\item[\rm(i)]$G$ is a Gröbner basis for $M$.
\item[\rm(ii)]For any vector $\bfe{f}\in A^{m}$,\vspace*{-2mm}
\begin{center}
$\bfe{f}\in M$ if and only if $\bfe{f}\xmapsto{\,\, G\,\, }_{+}\bfe{0}$.
\end{center}
\item[\rm(iii)]For any $\bfe{0}\neq \bfe{f}\in M$ there exist $\bfe{g}_1,\dots ,\bfe{g}_t\in G$ such that $\lm(\bfe{g}_j)|\lm(\bfe{f})$, $1\leq j\leq t$, {\rm(}i.e., $\ind(\lm(\bfe{g}_j))=\ind(\lm(\bfe{f}))$ and there exist $\alpha_j\in \mathbb{N}^n$ such that $\exp(\lm(\bfe{g}_j))+\alpha_j=\exp(\lm(\bfe{f}))${\rm)} and
    \[
    \lc(\bfe{f})\in \{\lc(\bfe{g}_1)c_{\bfe{g}_1,\alpha_1},\dots,\lc(\bfe{g}_t)
    c_{\bfe{g}_t,\alpha_t}\rangle.
    \]
\item[\rm(iv)] For $\alpha \in \mathbb{N}^n$ and $1\leq v\leq m,$ let $\{\alpha ,I\rangle_v$ be the right ideal of $R$ defined by
    \[
    \{\alpha ,M\rangle_v:=\{\{\lc(\bfe{f})\mid\bfe{f}\in M,\,\ind(\lm\bfe{f})=v,\,\exp(\lm(\bfe{f}))=\alpha\}\rangle.
    \]
    Then, $\{\alpha ,I\rangle_v=J_v$, with
    \begin{center}
    $J_v:=\{\{\lc(\bfe{g})c_{\bfe{g},\beta}\mid \bfe{g}\in G,\, \ind(\lm\bfe{g})=v\, \text{ and }\ \exp(\lm(\bfe{g}))+\beta =\alpha\}\rangle$.
    \end{center}
\end{enumerate}
\end{theorem}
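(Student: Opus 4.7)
The plan is to prove the cycle $\mathrm{(i)}\Rightarrow\mathrm{(iii)}\Rightarrow\mathrm{(ii)}\Rightarrow\mathrm{(i)}$, and then separately verify $\mathrm{(iii)}\Leftrightarrow\mathrm{(iv)}$. The central technical tool throughout will be the identity $\sigma^{\beta}(r)c_{\beta,\alpha}=c_{\beta,\alpha}\psi_{\beta,\alpha}(r)$ of Remark \ref{defPsi}, which is precisely what lets us translate between the ``$\sigma$-form'' of the coefficient equation in Definition \ref{reductionformodules}(ii) and the right-ideal-membership statement in (iii).

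For $\mathrm{(i)}\Rightarrow\mathrm{(iii)}$: given $\bfe{0}\neq \bfe{f}\in M$, reducibility with respect to $G$ supplies $\bfe{g}_1,\dots,\bfe{g}_t\in G$, exponents $\alpha_i$ with $\beta_i:=\exp(\lm(\bfe{g}_i))$, and $r_i\in R$ such that $\lc(\bfe{f})=\sum_i\lc(\bfe{g}_i)\sigma^{\beta_i}(r_i)c_{\bfe{g}_i,\alpha_i}$. A term-by-term application of (\ref{id1Psi}) rewrites this as $\lc(\bfe{f})=\sum_i\lc(\bfe{g}_i)c_{\bfe{g}_i,\alpha_i}\psi_{\beta_i,\alpha_i}(r_i)$, placing $\lc(\bfe{f})$ in the right ideal required by (iii).

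For $\mathrm{(iii)}\Rightarrow\mathrm{(ii)}$: the backward direction (``reducible to $\bfe{0}$ implies in $M$'') is immediate because each reduction step subtracts an element of the right submodule generated by $G\subseteq M$. For the forward direction I proceed by Noetherian induction on $\lm(\bfe{f})$: given $\bfe{0}\neq \bfe{f}\in M$, (iii) supplies $s_i\in R$ with $\lc(\bfe{f})=\sum_i\lc(\bfe{g}_i)c_{\bfe{g}_i,\alpha_i}s_i$; by bijectivity of $A$ the automorphism $\psi_{\beta_i,\alpha_i}$ is invertible (Remark \ref{defPsi}(i)), so setting $r_i:=\psi^{-1}_{\beta_i,\alpha_i}(s_i)$ recovers the $\sigma$-form of the coefficient equation exactly as in Remark \ref{defPsi}(iii) and yields a one-step reduction $\bfe{f}\xmapsto{\,\, G\,\, }\bfe{h}$ with $\bfe{h}:=\bfe{f}-\sum_i\bfe{g}_ir_ix^{\alpha_i}\in M$ and $\lm(\bfe{h})\prec\lm(\bfe{f})$. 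The well-order on $\Mon(A^m)$ forces termination at $\bfe{0}$. The implication $\mathrm{(ii)}\Rightarrow\mathrm{(i)}$ is then trivial: if $\bfe{0}\neq\bfe{f}\in M$ then $\bfe{f}\xmapsto{\,\, G\,\, }_{+}\bfe{0}$ has a first step, which exhibits reducibility.

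For $\mathrm{(iii)}\Leftrightarrow\mathrm{(iv)}$, I show the two containments defining $\{\alpha,M\rangle_v=J_v$ at each fixed pair $(\alpha,v)$. The inclusion $J_v\subseteq\{\alpha,M\rangle_v$ holds because for each $\bfe{g}\in G$ with $\ind(\lm(\bfe{g}))=v$ and $\exp(\lm(\bfe{g}))+\beta=\alpha$, the vector $\bfe{g}x^{\beta}$ lies in $M$ and, by the remark following Definition \ref{DefRepresentMon}, has leading coefficient $\lc(\bfe{g})c_{\bfe{g},\beta}$. The reverse inclusion is the content of (iii): each generator $\lc(\bfe{f})$ of $\{\alpha,M\rangle_v$ belongs to $J_v$ iff (iii) holds for $\bfe{f}$. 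The main obstacle I expect is the round-trip between the $\sigma$- and $\psi$-indexed coefficient equations, which hinges on bijectivity of $A$ via the identities in Remark \ref{defPsi}; once that translation is handled cleanly, the rest adapts the left-module argument from \cite{Fajardo3}, with the proviso that all reductions must be written with $r_ix^{\alpha_i}$ on the \emph{right} of $\bfe{g}_i$ to respect the right-module structure of $M$.
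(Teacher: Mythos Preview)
Your proposal is correct and uses essentially the same ideas as the paper's proof: the identity \eqref{id1Psi} to pass between the $\sigma$-form of Definition~\ref{reductionformodules}(ii) and the right-ideal membership in (iii), the well-ordering of $\Mon(A^m)$ to force reduction chains to terminate at $\bfe{0}$, and the observation that $\lc(\bfe{g}x^{\beta})=\lc(\bfe{g})c_{\bfe{g},\beta}$ to get $J_v\subseteq\{\alpha,M\rangle_v$. The only difference is organizational: the paper proves $(\mathrm{i})\Rightarrow(\mathrm{ii})$ directly (reducing $\bfe{f}\in M$ step by step using the Gr\"obner-basis hypothesis) and then records $(\mathrm{i})\Leftrightarrow(\mathrm{iii})$ as an immediate consequence of Definition~\ref{reductionformodules} and \eqref{id1Psi}, whereas you route the argument through the cycle $(\mathrm{i})\Rightarrow(\mathrm{iii})\Rightarrow(\mathrm{ii})\Rightarrow(\mathrm{i})$, explicitly inverting $\psi_{\beta_i,\alpha_i}$ to rebuild a reduction step from ideal membership. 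Both arrangements are valid and the underlying content is identical.
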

\begin{proof}
(i) $\Rightarrow$ (ii): Let $\bfe{f}\in M$, if $\bfe{f}=\bfe{0}$, then by definition $\bfe{f}\xmapsto{\,\, G\,\, }_{+}\bfe{0}$. If $\bfe{f}\neq\bfe{0}$, then there exists $\bfe{h}_1\in A^{m}$ such that $\bfe{f}\xmapsto{\,\, G\,\, }\bfe{h}_1$, with
$\lm(\bfe{f})\succ\lm(\bfe{h}_1)$ and $\bfe{f}-\bfe{h}_1\in \{G\rangle\subseteq M$, hence $\bfe{h}_1\in M$; if $\bfe{h}_1=\bfe{0}$, so we end. If $\bfe{h}_1\neq \bfe{0}$, then we can repeat this reasoning for $\bfe{h}_1$, and since $\Mon(A^{m})$ is well ordered, therefore $\bfe{f}\xmapsto{\,\, G\,\, }_{+}\bfe{0}$.

Conversely, if $f\xmapsto{\,\, G\,\, }_{+} 0$, then by the Theorem \ref{algdivformodules}, there exist $\bfe{g}_1,\dots,\bfe{g}_t\in G$ and $q_1,\dots,q_t\in A$ such that $\bfe{f}=\bfe{g}_1q_1+\cdots +\bfe{g}_tq_t$, i.e., $\bfe{f}\in M$.

\smallskip
(ii) $\Rightarrow$ (i): evident.

\smallskip
(i) $\Leftrightarrow$ (iii): this is a direct consequence of Definition \ref{reductionformodules} and the equation (\ref{id1Psi}).

\smallskip
(iii) $\Rightarrow$ (iv) Since $R$ is a right Noetherian ring, there exist $r_1,\dots ,r_s\in R$, $\bfe{f}_1,\dots,\bfe{f}_n\in M$ such that $\{\alpha ,M\rangle_v=\{r_1,\dots, r_s\rangle$, $\ind(\lm(\bfe{f}_i))=v$, $\lm(f_i)=x^{\alpha}$, $1\leq i\leq n$, with $\{r_1,\dots, r_s\rangle\subseteq $ $\{\lc(f_1),\dots,\lc(f_n)\rangle$, then $\{\lc(\bfe{f}_1),\dots,\lc(\bfe{f}_n)\rangle=\{\alpha,M\rangle_v$. Let $r\in \{\alpha ,M\rangle_v$, there exist $a_1,\dots,a_n\in R$ such that $r=\lc(\bfe{f}_1)a_1+\cdots+\lc(\bfe{f}_n)a_n$; by (iii), for each $i$ there exist $\bfe{g}_{1i},\dots,\bfe{g}_{t_ii}\in G$ and $b_{ji}\in R$ such that
\[
\lc(\bfe{f}_i)=\lc(\bfe{g}_{1i})c_{\bfe{g}_{1i},\alpha_{1i}}b_{1i}+\cdots +\lc(\bfe{g}_{t_ii})c_{\bfe{g}_{t_ii},\alpha_{t_ii}}b_{t_ii},
\]
with $v=\ind(\lm\bfe{f}_i)=\ind(\lm(\bfe{g}_{ji}))$ and $\alpha=\exp(\lm(\bfe{f}_i))=\alpha_{ji}+\exp(\lm(\bfe{g}_{ji}))$, thus $\{\alpha ,M\rangle_v\subseteq J_v$.

Conversely, if $r\in J_v$, then $r=\lc(\bfe{g}_{1})c_{\bfe{g}_{1},\beta_{1}}b_{1}+\cdots
+\lc(\bfe{g}_{t})c_{\bfe{g}_{t},\beta_{t}}b_{t}$, with $b_i\in R$, $\beta_i\in \mathbb{N}^n$, $\bfe{g}_i\in G$ such that $\ind(\lm(\bfe{g}_i))=v$, $\beta_i+\exp(\lm(\bfe{g}_i))=\alpha$ for any $1\leq i\leq t$.

Note that $\bfe{g}_ix^{\beta_i}\in M$, $\ind(\bfe{g}_ix^{\beta_i})=v$, $\exp(\lm(\bfe{g}_ix^{\beta_i}))=\alpha$, $\lc(\bfe{g}_ix^{\beta_i})=\lc(\bfe{g}_{i})c_{\bfe{g}_i,\beta_{i}}$, for $1\leq i\leq t$, thus $r=\lc(\bfe{g}_1x^{\beta_1})b_1+\cdots+\lc(\bfe{g}_tx^{\beta_t})b_t,$ i.e., $r\in \{\alpha ,M\rangle_v$.

\smallskip
(iv)$\Rightarrow$ (iii): let $\bfe{0}\neq \bfe{f}\in M$ and let $\alpha=\exp(\lm(\bfe{f}))$ and
$v=\ind(\lm(\bfe{f}))$, then $\lc(\bfe{f})\in\{\alpha,M\rangle_v$; by (iv) $\lc(\bfe{f})=\lc(\bfe{g}_{1})c_{\bfe{g}_1,\beta_{1}}b_{1}+\cdots
+\lc(\bfe{g}_{t})c_{\bfe{g}_t,\beta_{t}}b_{t}$, with $b_i\in R$, $\beta_i\in \mathbb{N}^n$, $\bfe{g}_i\in G$ such that $=\ind(\lm(\bfe{g}_i))=v$ and $\beta_i+\exp(\lm(\bfe{g}_i))=\alpha$ for any $1\leq i\leq t$. From this we conclude that, $\lm(\bfe{g}_i)|\lm(\bfe{f}).$
\end{proof}

Some useful consequences of Theorem \ref{teogrobnersigmapbwformodules} are the following results.
\begin{corollary}\label{153}
Let $M\neq 0$ be a right submodule of $A^{m}$. Then,
\begin{enumerate}
\item[\rm(i)] If $G$ is a Gröbner basis for $M$, then $M=\langle G\rangle$.
\item[\rm(ii)] Let $G$ be a Gröbner basis for $M.$ If $\bfe{f}\in M$ and
$\bfe{f}\xmapsto{\,\, G\,\, }_{+}\bfe{h}$, with $\bfe{h}$ reduced, then $\bfe{h}=\bfe{0}$.
\item[\rm(iii)] Let $G=\{\bfe{g}_1,\dots,\bfe{g}_t\}$ be a set of non-zero vectors of $M$ with $\lc(\bfe{g}_i)=1$ for each $1\leq i\leq t.$ If given $\bfe{0}\neq\bfe{r}\in M$ there exists $i$ such that $\lm(\bfe{g}_i)|\lm(\bfe{r}),$ then $G$ is a Gröbner basis of $M.$
\end{enumerate}
\end{corollary}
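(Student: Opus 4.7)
My plan is to handle the three parts in order, since each feeds into the next, and I will lean on Theorem~\ref{teogrobnersigmapbwformodules} throughout.

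For (i), the idea is to invoke the equivalence (i)$\Leftrightarrow$(ii) of Theorem~\ref{teogrobnersigmapbwformodules}: every $\bfe{f}\in M$ satisfies $\bfe{f}\xmapsto{G}_{+}\bfe{0}$. Tracing this reduction chain through the right division algorithm (Theorem~\ref{algdivformodules}) yields coefficients $q_1,\dots,q_t\in A$ with $\bfe{f}=\bfe{g}_1q_1+\cdots+\bfe{g}_tq_t$, so $M\subseteq\langle G\rangle$; the reverse inclusion is automatic because $G\subseteq M$.

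For (ii), I would observe that in any one-step reduction $\bfe{f}'\xmapsto{G}\bfe{h}'$ the difference $\bfe{f}'-\bfe{h}'$ is a sum of terms $\bfe{g}_jr_jx^{\alpha_j}$ with $\bfe{g}_j\in G$, hence lies in $\langle G\rangle$. Iterating along $\bfe{f}\xmapsto{G}_{+}\bfe{h}$ gives $\bfe{f}-\bfe{h}\in\langle G\rangle=M$ (using (i)), so $\bfe{h}\in M$. If $\bfe{h}\neq\bfe{0}$, the Gröbner basis property would force $\bfe{h}$ to be reducible by $G$, contradicting the assumption that $\bfe{h}$ is reduced. Therefore $\bfe{h}=\bfe{0}$.

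Part (iii) is where an actual (small) computation is needed: I must verify the two conditions of Definition~\ref{reductionformodules} for an arbitrary $\bfe{0}\neq\bfe{r}\in M$. The divisibility hypothesis gives some $\bfe{g}_i\in G$ with $\lm(\bfe{g}_i)\mid\lm(\bfe{r})$; setting $\beta:=\exp(\lm(\bfe{g}_i))$ and $\alpha:=\exp(\lm(\bfe{r}))-\beta$, condition (i) is immediate. For condition (ii) I need $r_i\in R$ with $\lc(\bfe{r})=\lc(\bfe{g}_i)\sigma^{\beta}(r_i)c_{\bfe{g}_i,\alpha}$. With $\lc(\bfe{g}_i)=1$, and because $A$ is bijective so that $c_{\bfe{g}_i,\alpha}$ is a unit in $R$ (Remark~\ref{identities}(ii)) and $\sigma^{\beta}$ is an automorphism of $R$, the choice $r_i:=\sigma^{-\beta}\!\bigl(\lc(\bfe{r})\,c_{\bfe{g}_i,\alpha}^{-1}\bigr)$ solves the equation. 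This exhibits a one-step reduction of $\bfe{r}$ by $G$, so $G$ is a Gröbner basis for $M$.

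The only mildly delicate point is pinpointing why bijectivity of $A$ is essential in (iii): without the invertibility of $c_{\bfe{g}_i,\alpha}$ and the bijectivity of $\sigma^{\beta}$, the scalar equation for $r_i$ could fail to be solvable even with $\lc(\bfe{g}_i)=1$. Parts (i) and (ii) are essentially bookkeeping on top of Theorem~\ref{teogrobnersigmapbwformodules}.
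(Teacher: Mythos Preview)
Your argument is correct and aligns with the paper's proof: parts (i) and (ii) are handled identically, and for (iii) the paper routes through characterization (ii) of Theorem~\ref{teogrobnersigmapbwformodules} (showing $\bfe{f}\in M\Leftrightarrow\bfe{f}\xmapsto{G}_{+}\bfe{0}$ via the division algorithm) while you go straight to the definition of Gr\"obner basis, but the core step---solving $\lc(\bfe{r})=\sigma^{\beta}(r_i)c_{\bfe{g}_i,\alpha}$ using $\lc(\bfe{g}_i)=1$ together with invertibility of $c_{\bfe{g}_i,\alpha}$ and bijectivity of $\sigma^{\beta}$---is the same in both.
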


\begin{proof}

\vspace*{-7mm}
\begin{enumerate}
\item[\rm (i)] Apply Theorem \ref{teogrobnersigmapbwformodules}.
\item[\rm (ii)] Assume that $\bfe{f}\in M$ and $\bfe{f}\xmapsto{\,\, G\,\, }_{+}\bfe{h}$, with $\bfe{h}$ reduced. Since $\bfe{f}-\bfe{h}\in\langle G\rangle=M$, then $\bfe{h}\in M$; if $\bfe{h}\neq\bfe{0}$ then $\bfe{h}$ can be reduced by $G$, but this is not possible since $\bfe{h}$ is reduced.
\item[\rm (iii)] Assume that $\textbf{\emph{f}}\in A^m.$ By Theorem \ref{algdivformodules} there exists $\textbf{\emph{r}}$ reduced such that $\textbf{\emph{f}}\xmapsto{\,\, G\,\, }_{+}
\textbf{\emph{r}}$. If $\textbf{\emph{f}}\in M$ then $\textbf{\emph{r}}\in M$; if $\textbf{\emph{r}}\neq \textbf{0}$, then by hypothesis there exists $\textbf{\emph{g}}_i\in G$ such that $lm(\textbf{\emph{g}}_i)$ divides $lm(\textbf{\emph{r}})$, thus, since $lc(\textbf{\emph{g}}_i)=1$, then $\textbf{\emph{r}}$ is reducible, which is a contradiction and therefore, $\textbf{\emph{f}}\xmapsto{\,\, G\,\, }_{+} \textbf{0}$. On the other hand, if $\textbf{\emph{f}}\xmapsto{\,\, G\,\, }_{+} \textbf{0}$, then $\textbf{\emph{f}}\in M$. Now
Theorem \ref{teogrobnersigmapbwformodules} {\rm (ii)} implies that $G$ is Gröbner basis of~$M.$
\end{enumerate}

\vspace*{-7mm}
\end{proof}

\begin{corollary}\label{reducedbasis}
Let $G$ be a Gröbner basis for a right submodule $M$ of $A^{m}$. Given $\bfe{g}\in G$, if $\bfe{g}$ is reducible with respect to $G'= G-\{\bfe{g}\}$, then $G'$ is a Gröbner basis for $M$.
\end{corollary}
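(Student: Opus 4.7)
My plan is to verify condition (iii) of Theorem \ref{teogrobnersigmapbwformodules} for $G'$, starting from the corresponding condition on $G$. Fix $\bfe{0}\neq\bfe{f}\in M$; since $G$ is a Gröbner basis there are $\bfe{g}_1,\dots,\bfe{g}_t\in G$ and $\alpha_j\in\mathbb{N}^n$ with $\ind(\lm(\bfe{g}_j))=\ind(\lm(\bfe{f}))$, $\exp(\lm(\bfe{g}_j))+\alpha_j=\exp(\lm(\bfe{f}))$, and
\[
\lc(\bfe{f})\in \langle \lc(\bfe{g}_j)c_{\bfe{g}_j,\alpha_j}:1\le j\le t\rangle.
\]
If $\bfe{g}\notin\{\bfe{g}_1,\dots,\bfe{g}_t\}$ then this same data witnesses condition (iii) for $G'$ and we are done; otherwise the idea is to eliminate, one index at a time, every $j$ with $\bfe{g}_j=\bfe{g}$ by invoking the reducibility of $\bfe{g}$ modulo $G'$.

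For such an index $j$, the reducibility of $\bfe{g}$ with respect to $G'$ together with Remark \ref{defPsi} (iii) (which interchanges the two forms of the coefficient equation in Definition \ref{reductionformodules} (ii)) produces $\bfe{g}'_1,\dots,\bfe{g}'_s\in G'$, scalars $s_i\in R$, and exponents $\gamma_i\in\mathbb{N}^n$ such that $\ind(\lm(\bfe{g}'_i))=\ind(\lm(\bfe{g}))$, $\exp(\lm(\bfe{g}'_i))+\gamma_i=\exp(\lm(\bfe{g}))$, and $\lc(\bfe{g})=\sum_{i=1}^{s}\lc(\bfe{g}'_i)c_{\bfe{g}'_i,\gamma_i}s_i$. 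Because indices match and exponents add, each $\bfe{g}'_i$ then divides $\bfe{f}$ with combined offset $\gamma_i+\alpha_j$, so the divisibility half of condition (iii) transfers immediately from $\bfe{g}$ to the $\bfe{g}'_i$.

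The main obstacle is the coefficient part: I must show
\[
\lc(\bfe{g})\,c_{\bfe{g},\alpha_j}\in \bigl\langle \lc(\bfe{g}'_i)\,c_{\bfe{g}'_i,\gamma_i+\alpha_j}:1\le i\le s\bigr\rangle
\]
as a right ideal of $R$. This is not automatic, because in the expansion $\sum_i\lc(\bfe{g}'_i)\,c_{\bfe{g}'_i,\gamma_i}\,s_i\,c_{\bfe{g},\alpha_j}$ the scalar $s_i$ is sandwiched between two structure constants which must be merged cleanly into the single constant $c_{\bfe{g}'_i,\gamma_i+\alpha_j}$. I would apply identity (\ref{identitypsi}) of Remark \ref{defPsi} (with $\beta=\exp(\lm(\bfe{g}'_i))$, $\alpha=\gamma_i$, $\gamma=\alpha_j$ and $r=s_i$) to rewrite $c_{\bfe{g}'_i,\gamma_i}\,s_i\,c_{\bfe{g},\alpha_j}=c_{\bfe{g}'_i,\gamma_i+\alpha_j}\,t_i$ for an explicit $t_i\in R$. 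Substituting back replaces the generator attached to $\bfe{g}$ by a right $R$-combination of generators attached to elements of $G'$; iterating over the finitely many $j$ with $\bfe{g}_j=\bfe{g}$ yields condition (iii) for $G'$, so $G'$ is a Gröbner basis of $M$.
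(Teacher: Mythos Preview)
Your proposal is correct and follows essentially the same approach as the paper's own proof: verify condition (iii) of Theorem \ref{teogrobnersigmapbwformodules} for $G'$, replace any occurrence of $\bfe{g}$ among the divisors by the elements of $G'$ that reduce it, and use identity (\ref{identitypsi}) of Remark \ref{defPsi} to merge the two structure constants $c_{\bfe{g}'_i,\gamma_i}\,s_i\,c_{\bfe{g},\alpha_j}$ into $c_{\bfe{g}'_i,\gamma_i+\alpha_j}\,t_i$. Your explicit mention of iterating over all indices $j$ with $\bfe{g}_j=\bfe{g}$ is a small clarification the paper leaves implicit.
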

\begin{proof}
According to Theorem \ref{teogrobnersigmapbwformodules}, it is enough to show that every $\bfe{f}\in M$ is reducible with respect to $G'$. Let $\bfe{f}$ be a nonzero vector in $M$; since $G$ is a Gröbner basis for $M$, $\bfe{f}$ is reducible with respect to $G$ and  there exist elements $\bfe{g}_1,\ldots,\bfe{g}_t\in G$ satisfying the conditions (i), (ii) and (iii) in the Definition \ref{reductionformodules}. If $\bfe{g}\neq \bfe{g}_i$ for each $1\leq i\leq t$, then we finished. Suppose that $\bfe{g}=\bfe{g}_j$ for some $j\in\{1,\ldots,t\}$ and let
$\beta_i=\exp(\lm(\bfe{g}_i))$ for $i\neq j$, $\beta=\exp(\lm(\bfe{g}))$, and $\alpha_i,\alpha\in\mathbb{N}^n$ such that $\alpha_i+\beta_i=\exp(\lm(\bfe{f}))=\alpha+\beta$. Thus,
\[
\lc(\bfe{f})=
\lc(\bfe{g}_1)c_{\beta_1,\alpha_1}r_1+\cdots+\lc(\bfe{g})c_{\beta,\alpha}r_j+
\cdots+\lc(\bfe{g}_t)c_{\beta_t,\alpha_t}r_t.
\]
On the other hand, since $\bfe{g}$ is reducible with respect to $G'$, there exist $\bfe{g}_1',\ldots,\bfe{g}_s'\in G'$ such
that $\lm(\bfe{g}_l')\mid \lm(\bfe{g})$ and $\lc(\bfe{g})=\sum_{k=1}^{s}\lc(\bfe{g}_k')c_{\beta_k',\alpha_k'}r_k'$, where
$\beta_k'=\exp(\lm(\bfe{g}_k'))$, $\alpha_k'\in\mathbb{N}^n$ and $\alpha_k'+\beta_k'=\exp(\lm(\bfe{g}))=\beta$. Thus,
$\lm(\bfe{g}_k')\mid \lm(\bfe{f})$ for $1\leq i \leq s$; moreover, using the equation (\ref{identitypsi}) of Remark \ref{defPsi}, we have
\[
c_{\beta_k',\alpha_k'}r_k'c_{\beta,\alpha}=c_{\beta_k',\alpha_k'}r_k'
c_{\beta_k'+\alpha_k',\alpha}
=c_{\beta_k',\alpha_k'+\alpha}r_k'',
\]
where $r_k''=\psi_{\beta_k',\alpha_k'+\alpha}(\psi^{-1}_{\beta_k',\alpha_k'}(r)
c_{\alpha_k',\alpha})$. Therefore,
\[
\lc(\bfe{g})c_{\beta,\alpha}=\sum_{k=1}^{s}\lc(\bfe{g}_k')
c_{\beta_k',\alpha_k'}r_k'c_{\beta,\alpha}=
\sum_{k=1}^{s}\lc(\bfe{g}_k')c_{\beta_k',\alpha_k'+\alpha}r_k''.
\]
Since  $\alpha+\beta=\exp(\lm(\bfe{f}))$, then $\alpha+\alpha_{k}'+\beta_{k}'=\exp(\lm(\bfe{f}))$. Further, if there exists
$\bfe{g}_{w}\in \{\bfe{g}_1,\ldots,\bfe{g}_t\}$ such that $\bfe{g}_w=\bfe{g}_z'$ for some $z\in \{1,\ldots,s\}$, then
$\beta_w=\beta_z'$ and $\alpha+\alpha_z'=\alpha_w$; therefore, in the representation of $\lc(\bfe{f})$ would appear the term $\lc(\bfe{g}_w)c_{\beta_w,\alpha_w}(r_w+r_z''r_j)$. Hence we conclude that $\bfe{f}$ is reducible with respect to $G'$ and consequently $G'$ is a Gröbner basis for $M$.
\end{proof}

\section{Buchberger's algorithm for right modules}\label{5.5.4}

\noindent Recall that we are assuming that $A$ is a bijective skew $PBW$ extension, we will prove in the present section that every submodule $M$ of $A^m$ has a Gröbner basis, and also we will construct the Buchberger's algorithm for computing such bases.

We start this section by fixing some notation and by proving a couple of preliminary results used later.
\begin{definition}\label{BFformodules}
Let $F := \{\bfe{g}_1,\dots,\bfe{g}_s\}\subseteq A^{m}$, $\bfe{X}_F$ the least common multiple of $\{\lm(\bfe{g}_1),\dots ,\lm(\bfe{g}_s)\}$, $\theta\in \mathbb{N}^n$, $\beta_i:=\exp(\lm(\bfe{g}_i))$ and $\gamma_i\in
\mathbb{N}^n$ such that $\gamma_i+\beta_i=\exp(\bfe{X}_F)$, $1\leq i\leq s$. $B_{F,\theta}$ will denote a finite set of generators in $R^s$ of right $R$-module
\begin{center}
$S_{F,\theta}:=\Syz^{r}_R[\lc(\bfe{g}_1)c_{\beta_1,\gamma_1+\theta}
\ \cdots \ \lc(\bfe{g}_s)c_{\beta_s,\gamma_s+\theta}]$.
\end{center}
For $\theta=\textbf{\emph{0}}:=(0,\dots,0)$, $S_{F,\theta}$ will be denoted by $S_F$ and $B_{F,\theta}$ by $B_F$.
\end{definition}
\begin{remark}\label{RemarkSyzygy}
Let $(b_1,\ldots,b_{s})\in S_{F,\theta}$. Since $A$ is bijective, then there exists an unique
$(b_1',\ldots,b_s')\in S_{F}$ such that
\begin{equation}\label{eq5.2}
b_i=\psi_{\beta_i,\gamma_i+\theta}(\psi_{\beta_i,\gamma_i}^{-1}(b_i')c_{\gamma_i,\theta}),
\text{ for each $1\leq i\leq s$.}
\end{equation}
In fact, the existence and uniqueness of $(b_1',\ldots,b_s')$ follows from the bijectivity of $A$. Now, since $(b_1,\ldots,b_{s})\in S_{F,\theta}$, then $\sum_{i=1}^slc(\bfe{g}_i)c_{\beta_i,\gamma_i+\theta}b_i=0$. Replacing $b_i$ of (\ref{eq5.2}) in the last equation, we obtain
\[
\sum_{i=1}^slc(\bfe{g}_i)c_{\beta_i,\gamma_i+\theta}
\psi_{\beta_i,\gamma_i+\theta}(\psi_{\beta_i,\gamma_i}^{-1}(b_i')c_{\gamma_i,\theta})=0.
\]
The equation (\ref{identitypsi}) of Remark \ref{defPsi}, yields
\[
c_{\beta_i,\gamma_i+\theta}
\psi_{\beta_i,\gamma_i+\theta}(\psi_{\beta_i,\gamma_i}^{-1}(b_i')c_{\gamma_i,\theta})=
c_{\beta_i,\gamma_i}b_i'c_{\beta_i+\gamma_i,\theta}.
\]
Thus, $\sum_{i=1}^slc(\bfe{g}_i)c_{\beta_i,\gamma_i}b_i'c_{\beta_i+\gamma_i,\theta}=0$, and since
$c_{\beta_i+\gamma_i,\theta}=c_{\bfe{X}_F,\theta}$ is invertible, then \linebreak $\sum_{i=1}^slc(g_i)c_{\beta_i,\gamma_i}b_i'=0$, i.e., $(b_1',\ldots,b_s')\in S_F$.
\end{remark}
\begin{lemma}\label{SumForModule}
Let $\bfe{g}_1,\ldots,\bfe{g}_s \in A$ , $c_1,\ldots, c_s \in R-\{0\}$ and $\alpha_1,\ldots,\alpha_{s},\beta_1,\ldots,\beta_{s}\in\mathbb{N}^{n}$ such that,
$\bfe{X}_{\delta}:=\lm(\lm(\bfe{g}_i)x^{\alpha_i})$ and $\beta_i:=\exp(\bfe{g}_i),$ for each $1\leq i\leq s$. If $\lm(\sum_{i=1}^{s}\bfe{g}_{i}c_{i}x^{\alpha_{i}})\prec \bfe{X}_{\delta}$, then there exist $r_1,\ldots,r_k\in R$ and $z_1,\ldots,z_s\in A$ such that
\[
\sum_{i=1}^{s}\bfe{g}_{i}c_{i}x^{\alpha_{i}}=\sum_{j=1}^{k}\biggl(\sum_{i=1}^{s}\bfe{g}_{i}
\psi_{\beta_i,\gamma_i}^{-1}(b_{ji})x^{\gamma_{i}}\biggr)r_{j}x^{\delta-\exp(\bfe{X}_{F})}+
\sum_{i=1}^{s}\bfe{g}_{i}z_{i},
\]
where $\bfe{X}_{F}$ is the least common multiple of $\lm(\bfe{g}_{1}),\ldots, \lm(\bfe{g}_{s})$, $\gamma_{i}\in \mathbb{N}^{n}$ is such that $\gamma_{i}+\beta_i=\exp(\bfe{X}_{F})$, for each $1\leq i\leq s$, and
\[
B_{F}=\{\bfe{b}_1,\dots,\bfe{b}_k\}:=\{(b_{11},\dots,b_{1s}),\dots,(b_{k1},\dots,b_{ks})\}.
\]
Moreover, $\lm(\sum_{i=1}^{s} \bfe{g}_{i}\psi_{\beta_i,\gamma_i}^{-1}(b_{ji})x^{\gamma_{i}}r_jx^{\delta-\exp(\bfe{X}_{F})}) \prec \bfe{X}_{\delta}$ for every $1\leq j\leq k$, and $\lm(\bfe{g}_{i}z_{i})\prec\bfe{X}_{\delta}$ for every $1\leq i\leq s$.
\end{lemma}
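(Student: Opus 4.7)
The plan is to compute the leading coefficient of $\bfe{g}_i c_i x^{\alpha_i}$ at $\bfe{X}_\delta$, translate the hypothesis of cancellation into a syzygy in $S_{F,\theta}$, lift this syzygy to $S_F$ via Remark \ref{RemarkSyzygy}, and use the generating set $B_F$ together with bijectivity of $A$ to produce the coefficients $r_j$ and the correction terms $z_i$. Writing $d_i:=\lc(\bfe{g}_i)$, the assumption $\bfe{X}_\delta=\lm(\lm(\bfe{g}_i)x^{\alpha_i})$ for every $i$ forces $\ind(\lm(\bfe{g}_i))$ to be independent of $i$ and $\beta_i+\alpha_i=\exp(\bfe{X}_\delta)=:\delta$; setting $\theta:=\delta-\exp(\bfe{X}_F)\in\mathbb{N}^n$ then yields $\alpha_i=\gamma_i+\theta$. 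A direct calculation using Proposition \ref{11.1.4} and the commutation rule (\ref{sigmadefinicion1}) shows that the coefficient of $\bfe{g}_i c_i x^{\alpha_i}$ at $\bfe{X}_\delta$ equals $d_i\sigma^{\beta_i}(c_i)c_{\beta_i,\alpha_i}$, so the hypothesis $\lm(\sum_i \bfe{g}_i c_i x^{\alpha_i})\prec\bfe{X}_\delta$ combined with identity (\ref{id1Psi}) gives $\sum_{i=1}^s d_i c_{\beta_i,\gamma_i+\theta}\psi_{\beta_i,\gamma_i+\theta}(c_i)=0$, i.e., $(\psi_{\beta_i,\gamma_i+\theta}(c_i))_i\in S_{F,\theta}$.

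By Remark \ref{RemarkSyzygy} there is a unique $(b_1',\ldots,b_s')\in S_F$ satisfying $c_i=\psi^{-1}_{\beta_i,\gamma_i}(b_i')c_{\gamma_i,\theta}$ for all $i$. Since $B_F=\{\bfe{b}_1,\ldots,\bfe{b}_k\}$ generates $S_F$ as a right $R$-module, I choose $t_1,\ldots,t_k\in R$ with $b_i'=\sum_{j=1}^k b_{ji}t_j$, and, using bijectivity of $A$, define $r_j:=\sigma^{-\exp(\bfe{X}_F)}(t_j)$, so that $\sigma^{\beta_i+\gamma_i}(r_j)=t_j$ for every $i$ (recall $\beta_i+\gamma_i=\exp(\bfe{X}_F)$ does not depend on $i$). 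With these choices set $z_i:=c_i x^{\alpha_i}-\sum_{j=1}^k \psi^{-1}_{\beta_i,\gamma_i}(b_{ji})x^{\gamma_i}r_j x^\theta$; the stated identity then holds by pure rearrangement.

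It remains to verify the two monomial bounds. For each fixed $j$, identity (\ref{id2Psi}) shows that the coefficient of $\bfe{g}_i\psi^{-1}_{\beta_i,\gamma_i}(b_{ji})x^{\gamma_i}$ at $\bfe{X}_F$ equals $d_i c_{\beta_i,\gamma_i}b_{ji}$; summing over $i$ gives zero because $(b_{j1},\ldots,b_{js})\in S_F$, so $\lm\bigl(\sum_i \bfe{g}_i\psi^{-1}_{\beta_i,\gamma_i}(b_{ji})x^{\gamma_i}\bigr)\prec\bfe{X}_F$, and degree compatibility then forces $\lm$ of the product with $r_j x^\theta$ to be strictly $\prec\bfe{X}_\delta$. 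For each fixed $i$, the coefficient of $\bfe{g}_i z_i$ at $\bfe{X}_\delta$ equals
\[
d_i\sigma^{\beta_i}(c_i)c_{\beta_i,\alpha_i}-\sum_{j=1}^k d_i c_{\beta_i,\gamma_i}b_{ji}\sigma^{\beta_i+\gamma_i}(r_j)c_{\beta_i+\gamma_i,\theta};
\]
substituting $c_i=\psi^{-1}_{\beta_i,\gamma_i}(b_i')c_{\gamma_i,\theta}$ and applying identity (\ref{identitypsi}) rewrites the first summand as $d_i c_{\beta_i,\gamma_i}b_i'c_{\beta_i+\gamma_i,\theta}$, while $\sigma^{\beta_i+\gamma_i}(r_j)=t_j$ together with $b_i'=\sum_j b_{ji}t_j$ rewrites the second as the same expression, so the two cancel.

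The main subtlety is this calibration step $r_j:=\sigma^{-\exp(\bfe{X}_F)}(t_j)$: a naive choice $r_j=t_j$ fails because right-multiplication by $x^\theta$ introduces the twist $\sigma^{\beta_i+\gamma_i}$ on $r_j$ via (\ref{sigmadefinicion1}), and only the bijectivity of $A$ allows one to absorb that twist by the inverse endomorphism. Everything else reduces either to the bookkeeping identities in Remark \ref{defPsi} or to the well-definedness of $\psi^{-1}_{\beta_i,\gamma_i}$, which is again guaranteed by bijectivity.
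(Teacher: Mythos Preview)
Your proof is correct and follows the same overall strategy as the paper: identify $\theta=\delta-\exp(\bfe{X}_F)$, convert the cancellation hypothesis into an element of $S_{F,\theta}$, lift to $S_F$ via Remark~\ref{RemarkSyzygy}, express the lift in terms of $B_F$, and set $r_j:=\sigma^{-\exp(\bfe{X}_F)}(t_j)$ to absorb the twist. The only real difference is organisational: the paper builds $z_i$ \emph{constructively} as $p_i+q_i$, where $p_i$ arises from rewriting $c_{\gamma_i,\theta}x^{\gamma_i+\theta}=x^{\gamma_i}x^{\theta}-p_{\gamma_i,\theta}$ and $q_i$ from commuting the scalar $\psi_{\beta_i,\gamma_i}^{-1}(r_j')$ past $x^{\gamma_i}$, and then bounds each piece separately; you instead \emph{define} $z_i$ as the difference $c_ix^{\alpha_i}-\sum_j\psi^{-1}_{\beta_i,\gamma_i}(b_{ji})x^{\gamma_i}r_jx^{\theta}$ and check $\lm(\bfe{g}_iz_i)\prec\bfe{X}_\delta$ by showing the coefficient at $\bfe{X}_\delta$ vanishes. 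Your route is a bit shorter and avoids tracking the intermediate polynomials $p_{\gamma_i,\theta}$ and $q_{ij}'$; the paper's route makes the lower-order structure of $z_i$ more explicit. One small wording issue: the passage from $\lm(\sum_i\bfe{g}_i\psi^{-1}_{\beta_i,\gamma_i}(b_{ji})x^{\gamma_i})\prec\bfe{X}_F$ to the bound after right-multiplication by $r_jx^{\theta}$ is really the monotonicity condition~(ii) of Definition~\ref{DefOrderMon} (combined with Proposition~\ref{11.1.4}), not degree compatibility per se, but the argument is sound.
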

\begin{proof}
Since $\bfe{X}_{\delta}=\lm(\lm(\bfe{g}_i)x^\alpha_i)$, then $\lm(\bfe{g}_i)\mid\bfe{X}_{\delta}$ and hence $\bfe{X}_F\mid\bfe{X}_{\delta}$, so there exists $\theta\in\mathbb{N}^n$ such that $\exp(\bfe{X}_F)+\theta=\delta$. On the other hand, $\gamma_i+\beta_i=\exp(\bfe{X}_F)$ and
$\alpha_i+\beta_i=\delta$, so $\alpha_i=\gamma_i+\theta$ for each $1\leq i\leq s$. Now,  $\lm(\sum_{i=1}^{s}\bfe{g}_{i}c_{i}x^{\alpha_{i}})\prec\bfe{X}_{\delta}$ implies that $\sum_{i=1}^{s}\lc(\bfe{g}_{i})\sigma^{\beta_i}(c_{i})c_{\beta_{i},\alpha_{i}}=0$. The equation (\ref{id1Psi}) of Remark \ref{defPsi}, yields  $\sum_{i=1}^{s}\lc(\bfe{g}_{i})c_{\beta_i,\alpha_i}d_i=0$, with $d_i=\psi_{\beta_i,\alpha_i}(c_i)$, for each $1\leq i\leq s$. So, $\sum_{i=1}^{s}\lc(\bfe{g}_{i})c_{\beta_i,\gamma_i+\theta}d_i=0$. This implies that $(d_{1},\ldots,d_{s})\in S_{F,\theta}$. By Remark \ref{RemarkSyzygy}, there exists a unique $(d'_1,\ldots,d'_s)\in S_{F}$ such that
$d_{i}=\psi_{\beta_i,\gamma_i+\theta}(\psi_{\beta_i,\gamma_i}^{-1}(d_i')c_{\gamma_i,\theta})$. Then,
\[
c_i=\psi_{\beta_i,\alpha_i}^{-1}(d_i)=\psi_{\beta_i,\gamma_i+\theta}^{-1}(d_i)=
\psi_{\beta_i,\gamma_i}^{-1}(d_i')c_{\gamma_i,\theta},
\]
and therefore, we have
\begin{align*}
\sum_{i=1}^{s}\bfe{g}_{i}c_{i}x^{\alpha_{i}} & = \sum_{i=1}^{s}\bfe{g}_{i}\psi_{\beta_i,\gamma_i}^{-1}(d_i')c_{\gamma_i,\theta}x^{\alpha_{i}}\\
&=\sum_{i=1}^{s}\bfe{g}_{i}\psi_{\beta_i,\gamma_i}^{-1}(d_i')c_{\gamma_i,\theta}x^{\gamma_i+\theta}\\
&=\sum_{i=1}^{s}\bfe{g}_{i}\psi_{\beta_i,\gamma_i}^{-1}(d_i')(x^{\gamma_i}x^{\theta}-p_{\gamma_i,\theta})\\
&=\sum_{i=1}^{s}\bfe{g}_{i}\psi_{\beta_i,\gamma_i}^{-1}(d_i')x^{\gamma_i}x^{\theta}+
\sum_{i=1}^{s}\bfe{g}_{i}p_i
\end{align*}
with $p_{i}=0$ or $\lm(p_i)\prec x^{\theta+\gamma_{i}}.$ Hence, $\bfe{g}_{i}p_{i}=0$ or $\lm(\bfe{g}_ip_i)\prec x^{\theta+\gamma_{i}+\beta_{i}}=\bfe{X}_{\delta}$. On the
other hand, since  $(d'_1,\ldots,d'_s)\in S_{F}$, then there exist $r'_{1},\ldots,r'_{k}\in R$ such
that $(d'_1,\ldots,d'_s)=\bfe{b}_{1}r'_{1}+\cdots+\bfe{b}_{k}r'_{k}=
(b_{11},\ldots,b_{1s})r'_1+\cdots+(b_{k1},\ldots,b_{ks})r'_k$, thus $d'_{i}=\sum_{j=1}^{k}b_{ji}r'_{j}$.

\noindent Therefore
\begin{align*}
\sum_{i=1}^{s}\bfe{g}_{i}\psi_{\beta_i,\gamma_i}^{-1}(d_i')x^{\gamma_i}x^{\theta}
&=\sum_{i=1}^{s}\bfe{g}_{i}\psi_{\beta_i,\gamma_i}^{-1}\Bigl(\sum_{j=1}^{k}b_{ji}r'_{j}
\Bigr)x^{\gamma_i}x^{\theta}\\
&=\sum_{i=1}^{s}\bfe{g}_{i}\Bigl(\sum_{j=1}^{k}\psi_{\beta_i,\gamma_i}^{-1}(b_{ji})
\psi_{\beta_i,\gamma_i}^{-1}(r'_{j})\Bigr)x^{\gamma_i}x^{\theta}\\
&=\sum_{i=1}^{s}\bfe{g}_{i}\Bigl(\sum_{j=1}^{k}\psi_{\beta_i,\gamma_i}^{-1}(b_{ji})(
x^{\gamma_i}\sigma^{-\gamma_i}(\psi_{\beta_i,\gamma_i}^{-1}(r'_{j}))+q_{ij}')\Bigr)x^{\theta},
\end{align*}
with $q_{ij}'=0$ or $\lm(q_{ij}')\prec x^{\gamma_i}.$ Since
$\psi^{-1}_{\beta_i,\gamma_i}(r)=\sigma^{\gamma_i}(\sigma^{-(\gamma_i+\beta_i)}(r))=
\sigma^{\gamma_i}(\sigma^{-\exp(\bfe{X}_F)}(r))$, then we obtain
\begin{align*}
\sum_{i=1}^{s}\bfe{g}_{i}\psi_{\beta_i,\gamma_i}^{-1}(d_i')x^{\gamma_i}x^{\theta}
&=\sum_{i=1}^{s}\bfe{g}_{i}\Bigl(\sum_{j=1}^{k}\psi_{\beta_i,\gamma_i}^{-1}(b_{ji})(x^{\gamma_i}
\sigma^{-\exp(\bfe{X}_F)}(r'_j)+q_{ij}')\Bigr)x^{\theta}\\
&=\sum_{j=1}^{k}\sum_{i=1}^{s}\bfe{g}_i\psi_{\beta_i,\gamma_i}^{-1}(b_{ji})x^{\gamma_i}r_jx^{\theta}+
\sum_{i=1}^{s}\sum_{j=1}^{k}\bfe{g}_iq_{ij}x^{\theta}\\
&=\sum_{j=1}^{k}\Bigl(\sum_{i=1}^{s}\bfe{g}_i\psi_{\beta_i,\gamma_i}^{-1}(b_{ji})x^{\gamma_i}\Bigr)
r_jx^{\theta}+\sum_{i=1}^{s}\bfe{g}_{i}q_{i},
\end{align*}
where $q_{ij}:=\psi_{\beta_i,\gamma_i}^{-1}(b_{ji})q_{ij}',$ $r_j:=\sigma^{-\exp(\bfe{X}_F)}(r_j')$ and so, $q_{i}:=\sum_{j=1}^{k}q_{ij}x^{\theta}=0$ or $\lm(q_{i})\prec x^{\theta+\gamma_{i}}$. Finally we get,
\[
\sum_{i=1}^{s}\bfe{g}_{i}c_{i}x^{\alpha_{i}}=\sum_{j=1}^{k}\biggl(\sum_{i=1}^{s}\bfe{g}_{i}
\psi_{\beta_i,\gamma_i}^{-1}(b_{ji})x^{\gamma_{i}}\biggr)r_{j}x^{\delta-\exp(\bfe{X}_{F})}+
\sum_{i=1}^{s}\bfe{g}_{i}z_{i},
\]
with $z_{i}:=p_{i}+q_{i}$ for $1\leq i\leq s$. Is easy to see that
$\lm(\sum_{i=1}^{s}\bfe{g}_i\psi_{\beta_i,\gamma_i}^{-1}(b_{ji})x^{\gamma_{i}}r_{j}x^{\theta})\prec \bfe{X}_{\delta}$ since $\lm(\sum_{i=1}^{s}\bfe{g}_i\psi_{\beta_i,\gamma_i}^{-1}(b_{ji})x^{\gamma_{i}}) \prec x^{\gamma_{i}+\beta_{i}}$, and $\lm(\bfe{g}_iz_i)=\lm(\bfe{g}_ip_i+\bfe{g}_iq_i)\prec\bfe{X}_{\delta}$.
\end{proof}

Under the notation used in Definition \ref{BFformodules} and Lemma \ref{SumForModule}, we will prove the main result of the present section.

\begin{theorem}\label{buchbergerformod}
Let $M\neq 0$ be a right submodule of $A^{m}$ and let $G$ be a finite subset of non-zero generators of $M$. Then the following conditions are equivalent.
\begin{enumerate}
\item[\rm (i)] $G$ is a Gröbner basis of $M$.
\item[\rm (ii)] For all $F:=\{\bfe{g}_1,\dots,\bfe{g}_s\}\subseteq G$, with $\bfe{X}_F\neq\bfe{0}$ and for any $(b_1,\dots,b_s)\in B_{F}$, we have
\[
\sum_{i=1}^s\bfe{g}_i\psi_{\beta_i,\gamma_i}^{-1}(b_i)x^{\gamma_i}\xmapsto{\,\, G\,\, }_+ \bfe{0}.
\]
\end{enumerate}
\begin{proof}
{\rm (i)} $\Rightarrow$ {\rm (ii)}: We observe that $\bfe{f}:=\sum_{i=1}^s\bfe{g}_i\psi_{\beta_i,\gamma_i}^{-1}(b_i)x^{\gamma_i}\in M.$ Then Theorem
\ref{teogrobnersigmapbwformodules} yields $\bfe{f}\xmapsto{\,\, G\,\, }_+\bfe{0}$.

\smallskip\noindent
{\rm (ii)} $\Rightarrow$ {\rm (i)}: Assume that $\bfe{0}\neq\bfe{f}\in M.$ We will prove that the condition (iii) of Theorem \ref{teogrobnersigmapbwformodules} holds. If $G:=\{\bfe{g}_1,\dots,\bfe{g}_t\}$, then there exist $h_1,\dots,h_t\in A$ such that $\bfe{f}=\bfe{g}_1h_1+\cdots+\bfe{g}_th_t$ and we can choose $\{h_i\}_{i=1}^t$ such that $\bfe{X}_{\delta}:=\max\{\lm(\lm(\bfe{g}_i)\lm(h_i))\}_{i=1}^t$ is minimal. Let $\lm(h_i):=x^{\alpha_i}$, $c_{i}:=\lc(h_{i})$, $\lm(\bfe{g}_i):=x^{\beta_i}$ for $1\leq i\leq t$ and $F:=\{\bfe{g}_i\in G\mid \lm(\lm(\bfe{g}_i)\lm(h_i))=\bfe{X}_{\delta}\}.$ Up to a renumbering the elements of $G,$ we can assume that $F=\{\bfe{g}_1,\dots,\bfe{g}_s\}$. We will consider two possible cases.

\medskip
\textit{Case 1}: $\lm(\bfe{f})=\bfe{X}_{\delta}$. Then $\lm(\bfe{g}_i)\mid \lm(\bfe{f})$ for
$1\leq i\leq s$ and
\[
\lc(\bfe{f})=\sum_{i=1}^{s}\lc(\bfe{g}_i)\sigma^{\beta_i}(c_i)c_{\beta_i,\alpha_i}
=\sum_{i=1}^{s}\lc(\bfe{g}_i)c_{\beta_i,\alpha_i}\psi_{\beta_i,\alpha_i}(c_i).
\]
i.e., the condition (iii) of Theorem \ref{teogrobnersigmapbwformodules} holds.

\medskip
\textit{Case 2}: $\lm(\bfe{f})\prec\bfe{X}_{\delta}$. We will prove that this yields a contradiction. To begin, note that $f$ can be written as
\begin{align}\label{writingf}
\bfe{f}=\sum_{i=1}^{s}\bfe{g}_ic_{i}x^{\alpha_{i}}+\sum_{i=1}^{s}\bfe{g}_i(h_{i}-c_{i}x^{\alpha_{i}})+
\sum_{i=s+1}^{t}\bfe{g}_ih_{i}.
\end{align}
We have $\lm(\bfe{g}_i(h_{i}-c_{i}x^{\alpha_{i}}))\prec\bfe{X}_{\delta}$ for each $1\leq i\leq s$ and $\lm(\bfe{g}_ih_{i})\prec\bfe{X}_{\delta}$ for each $s+1\leq i\leq t.$ Hence
\[
\lm(\sum_{i=1}^{s}\bfe{g}_i(h_{i}-c_{i}x^{\alpha_{i}}))\prec\bfe{X}_{\delta}\text{ and }
\lm(\sum_{i=s+1}^{t}\bfe{g}_ih_{i})\prec\bfe{X}_{\delta},
\]
and $\lm(\sum_{i=1}^{s}\bfe{g}_{i}c_{i}x^{\alpha_{i}})\prec\bfe{X}_{\delta}$. Under the notation used in Lemma
\ref{SumForModule} (and its notation), we have
\begin{align}
\sum_{i=1}^{s}\bfe{g}_{i}c_{i}x^{\alpha_{i}}=\sum_{j=1}^{k}\Bigl(\sum_{i=1}^{s}
\bfe{g}_{i}\psi_{\beta_i,\gamma_i}^{-1}(b_{ji})x^{\gamma_{i}}\Bigr)r_{j}
x^{\delta-\exp(\bfe{X}_{F})}+\sum_{i=1}^{s}\bfe{g}_{i}z_{i},
\end{align}
where $\lm(\sum_{i=1}^{s}\bfe{g}_{i}\psi_{\beta_i,\gamma_i}^{-1}(b_{ji})x^{\gamma_{i}}
x^{\delta-\exp(\bfe{X}_{F})})\prec\bfe{X}_{\delta}$ for each $1\leq j\leq k$ and $\lm(\bfe{g}_{i}z_i)\prec\bfe{X}_{\delta}$ for $1\leq i\leq s$. By the hypothesis,
$\sum_{i=1}^s\bfe{g}_i\psi_{\beta_i,\gamma_i}^{-1}(b_{ji})x^{\gamma_i}
\xmapsto{\,\,G\,\,}_{+}\bfe{0}$, whence, by Theorem \ref{algdivformodules}, there exist $q_1,\dots,q_t\in A$ such that
\[
\sum_{i=1}^s\bfe{g}_i\psi_{\beta_i,\gamma_i}^{-1}(b_{ji})x^{\gamma_i}=\sum_{i=1}^t\bfe{g}_iq_i,
\]
with $\lm(\sum_{i=1}^s\bfe{g}_i\psi_{\beta_i,\gamma_i}^{-1}(b_{ji})x^{\gamma_i})=
\max\{\lm(\lm(\bfe{g}_i)\lm(q_i))\}_{i=1}^t.$ Since $(b_{j1},\dots,b_{js})$ $\in B_{F}$, then using the equation (\ref{id2Psi}) of Remark \ref{defPsi}, we get
\[
\lc(\sum_{i=1}^s\bfe{g}_i\psi_{\beta_i,\gamma_i}^{-1}(b_{ji})x^{\gamma_i})=
\sum_{i=1}^{s}\lc(\bfe{g}_i)\sigma^{\beta_i}(\psi_{\beta_i,\gamma_i}^{-1}(b_{ji}))c_{\beta_i,\gamma_i}
=\sum_{i=1}^{s}\lc(\bfe{g}_i)c_{\beta_i,\gamma_i}b_{ji}=0.
\]
Hence $\lm\Bigl(\sum_{i=1}^s\bfe{g}_i\psi_{\beta_i,\gamma_i}^{-1}(b_{ji})x^{\gamma_i}\Bigr)\prec\bfe{X}_{F}$ and $\lm(\lm(\bfe{g}_i)\lm(q_i))\prec\bfe{X}_{F}$ for each $1\leq i\leq t$. Therefore,
\begin{align*}
\sum_{j=1}^{k}\Bigl(\sum_{i=1}^{s}\bfe{g}_i\psi_{\beta_i,\gamma_i}^{-1}(b_{ji})x^{\gamma_{i}}\Bigr)
r_{j}x^{\delta-\exp(\bfe{X}_{F})} &
= \sum_{j=1}^{k}\Bigl(\sum_{i=1}^{t}\bfe{g}_iq_i\Bigr)r_{j}x^{\delta-\exp(\bfe{X}_{F})}\\
& = \sum_{i=1}^{t}\sum_{j=1}^{k}\bfe{g}_iq_ir_{j}x^{\delta-\exp(\bfe{X}_{F})}\\
& = \sum_{i=1}^{t}\bfe{g}_i\widetilde{q}_{i},
\end{align*}
with $\widetilde{q}_{i}:=\sum_{j=1}^{k}q_ir_{j}x^{\delta-\exp(\bfe{X}_{F})}$ and
$\lm(\bfe{g}_i\widetilde{q}_{i})\prec\bfe{X}_{\delta}$ for each $1\leq i\leq t$. Substituting
$\sum_{i=1}^{s}\bfe{g}_{i}c_{i}x^{\alpha_{i}}=\sum_{i=1}^{t}\bfe{g}_i\widetilde{q}_{i}+
\sum_{i=1}^{s}\bfe{g}_{i}z_i$ into equation (\ref{writingf}), we obtain\vspace*{-1mm}
\[
f=\sum_{i=1}^{t}\bfe{g}_i\widetilde{q}_{i}+\sum_{i=1}^{s}\bfe{g}_i(h_{i}-c_{i}x^{\alpha_{i}})+
\sum_{i=1}^{s}\bfe{g}_iz_{i}+\sum_{i=s+1}^{t}\bfe{g}_ih_{i},
\]
and so we have expressed $\bfe{f}$ as a combination of vectors $\bfe{g}_{1},\ldots,\bfe{g}_{t}$, where each of its terms has leading monomial $\prec\bfe{X}_{\delta}$. This contradicts the minimality of $\bfe{X}_{\delta}$ and finishes the proof.
\end{proof}
\end{theorem}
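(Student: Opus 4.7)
The plan is to establish the two implications separately, relying on the characterization of Gröbner bases given in Theorem \ref{teogrobnersigmapbwformodules} together with the syzygy decomposition in Lemma \ref{SumForModule}.

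For (i) $\Rightarrow$ (ii), the argument is immediate: given any $F=\{\bfe{g}_1,\dots,\bfe{g}_s\}\subseteq G$ and any $(b_1,\dots,b_s)\in B_F$, the element $\bfe{f}:=\sum_{i=1}^{s}\bfe{g}_i\psi_{\beta_i,\gamma_i}^{-1}(b_i)x^{\gamma_i}$ lies in the right submodule generated by $G$, and since (by hypothesis) $G$ is a Gröbner basis of $M=\langle G\rangle$, Theorem \ref{teogrobnersigmapbwformodules}(ii) yields $\bfe{f}\xmapsto{G}_{+}\bfe{0}$.

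For (ii) $\Rightarrow$ (i), I would verify condition (iii) of Theorem \ref{teogrobnersigmapbwformodules}. Fix $\bfe{0}\neq \bfe{f}\in M$. Among all representations $\bfe{f}=\sum_{i=1}^{t}\bfe{g}_i h_i$ with $\bfe{g}_i\in G$, choose one that minimizes $\bfe{X}_{\delta}:=\max\{\lm(\lm(\bfe{g}_i)\lm(h_i))\}_{i=1}^{t}$ with respect to the monomial order; this is possible because $\Mon(A^m)$ is well ordered. After renumbering, let $F=\{\bfe{g}_1,\dots,\bfe{g}_s\}$ consist of those indices that realize the maximum, and let $c_i:=\lc(h_i)$, $x^{\alpha_i}:=\lm(h_i)$. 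If $\lm(\bfe{f})=\bfe{X}_{\delta}$, then each $\lm(\bfe{g}_i)\mid \lm(\bfe{f})$ for $1\leq i\leq s$ and, using identity (\ref{id1Psi}), $\lc(\bfe{f})$ is visibly an $R$-combination of the elements $\lc(\bfe{g}_i)c_{\beta_i,\alpha_i}$, so condition (iii) of Theorem \ref{teogrobnersigmapbwformodules} is satisfied.

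The substantive case is $\lm(\bfe{f})\prec \bfe{X}_{\delta}$, which must be ruled out by deriving a contradiction with the minimality of $\bfe{X}_{\delta}$. Split
\[
\bfe{f}=\sum_{i=1}^{s}\bfe{g}_i c_i x^{\alpha_i}+\sum_{i=1}^{s}\bfe{g}_i(h_i-c_i x^{\alpha_i})+\sum_{i=s+1}^{t}\bfe{g}_i h_i,
\]
where the last two sums have leading monomials strictly below $\bfe{X}_{\delta}$ by the choice of $F$, so the first sum does as well. Applying Lemma \ref{SumForModule} to the first sum expresses it as a combination involving the syzygy polynomials $\sum_{i=1}^{s}\bfe{g}_i\psi_{\beta_i,\gamma_i}^{-1}(b_{ji})x^{\gamma_i}$ for $(b_{j1},\dots,b_{js})\in B_F$, multiplied on the right by $r_j x^{\delta-\exp(\bfe{X}_F)}$, plus an $\bfe{g}_i$-combination with leading monomials strictly below $\bfe{X}_{\delta}$. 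Now invoke hypothesis (ii): each syzygy polynomial reduces to $\bfe{0}$ via $G$, so by Theorem \ref{algdivformodules} it has a representation $\sum_{i=1}^{t}\bfe{g}_i q_i$ with $\lm(\lm(\bfe{g}_i)\lm(q_i))\preceq \lm(\sum_{i=1}^{s}\bfe{g}_i\psi_{\beta_i,\gamma_i}^{-1}(b_{ji})x^{\gamma_i})$. The main technical point is to check that this latter leading monomial is strictly below $\bfe{X}_F$, which follows because $(b_{j1},\dots,b_{js})\in B_F$ forces the ``$\bfe{X}_F$-coefficient'' to vanish, once one applies identity (\ref{id2Psi}). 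Multiplying by $r_j x^{\delta-\exp(\bfe{X}_F)}$ then keeps every term strictly below $\bfe{X}_{\delta}$. Substituting back produces a new $G$-representation of $\bfe{f}$ all of whose terms have leading monomial strictly below $\bfe{X}_{\delta}$, contradicting minimality.

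The main obstacle is precisely the bookkeeping in this last step: one must track carefully how the monomial orders behave under right multiplication by $r_j x^{\delta-\exp(\bfe{X}_F)}$ and under the $\psi$-twists, so that the syzygy condition indeed forces a drop below $\bfe{X}_F$ (and hence below $\bfe{X}_{\delta}$ after multiplication). The identities of Remark \ref{defPsi}, in particular (\ref{id1Psi})--(\ref{identitypsi}), are exactly what make this bookkeeping go through in the non-commutative bijective setting.
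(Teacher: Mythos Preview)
Your proposal is correct and follows essentially the same approach as the paper's proof: both directions are handled identically, with (ii) $\Rightarrow$ (i) proved by choosing a representation minimizing $\bfe{X}_\delta$, splitting off the leading terms, applying Lemma \ref{SumForModule} to the resulting syzygy-type sum, and then using hypothesis (ii) together with Theorem \ref{algdivformodules} and identity (\ref{id2Psi}) to rewrite everything with strictly smaller leading monomials, contradicting minimality.
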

\begin{corollary}\label{algorithmforbijectivemodules}
Let $F=\{\bfe{f}_1,\dots ,\bfe{f}_s\}$ be a set of non-zero vectors of $A^{m}$. The algorithm below (Algorithm \ref{algoBuchbergerinAm}) produces a Gr\"obner basis for the right submodule $\langle F\rangle$ of $A^{m},$ where $P(X)$ denotes the set of subsets of the set $X$.

\begin{algorithm}[h!]\small
\caption{Right Buchberger's algorithm in $A^{m}$}
\KwIn{$F := \{\bfe{f}_1,\dots,\bfe{f}_s\}\subseteq A^{m}$, $\bfe{f}_i\neq \bfe{0}$, $1\leq i\leq s$}
\KwOut{$G=\{\bfe{g}_1,\dots ,\bfe{g}_t\}$ a Gröbner basis for $\langle F\rangle$}
\Ini $G\gets\emptyset, G'\gets F$\;
\While{$G'\neq G$}{
 $D\gets P(G')-P(G)$;\\
 $G\gets G'$;\\
 \For{each $S:=\{\bfe{g}_{i_1},\dots ,\bfe{g}_{i_k}\}\in D$ with $\bfe{X}_S\neq\bfe{0}$}{
  Compute $B_S$;\\
  \For{ each $\bfe{b}=(b_1,\dots ,b_k)\in B_S$}{
   Reduce $\sum_{j=1}^k\bfe{g}_{i_j}\psi_{\beta_{i_j},\gamma_{j}}^{-1}(b_j)
   x^{\gamma_j}\xmapsto{\,\, G'\,\, }_+\bfe{r}$; \# {\scriptsize with $r$ reduced with respect to $G'$; $\beta_{i_j},\gamma_j$ as in Def \ref{BFformodules}}\\
   \If{$\bfe{r}\ne\bfe{0}$}{
     $G'\gets G'\cup \{\bfe{r}\}$;
   }
  }
 }
}
\label{algoBuchbergerinAm}
\end{algorithm}
\end{corollary}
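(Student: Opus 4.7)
The proof divides into two essentially independent parts: \emph{correctness} (the output is a Gröbner basis of $\langle F\rangle$ whenever the loop terminates) and \emph{termination} itself. I outline each and then flag the technical heart of the argument.

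For correctness, set $U_0:=F$ and let $U_k$ denote the value of $G'$ at the end of iteration $k$. A straightforward induction on $k$ shows $\langle U_k\rangle=\langle F\rangle$, since every appended remainder $\bfe{r}$ is obtained from elements of the current $G'$ by subtracting a reduction tail, so both pieces already lie in $\langle F\rangle$. Assume the loop halts with $G=U_K$. By Theorem \ref{buchbergerformod}, it suffices to verify that for each $S=\{\bfe{g}_{i_1},\dots,\bfe{g}_{i_k}\}\subseteq U_K$ with $\bfe{X}_S\neq\bfe{0}$ and each $\bfe{b}\in B_S$, the element $\bfe{f}_{S,\bfe{b}}:=\sum_j\bfe{g}_{i_j}\psi^{-1}_{\beta_{i_j},\gamma_j}(b_j)x^{\gamma_j}$ reduces to $\bfe{0}$ under $U_K$. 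The combinatorial key is that the sets $D_k=P(U_{k-1})-P(U_{k-2})$ partition $P(U_K)$, so each such $S$ is processed at exactly one iteration, producing a remainder $\bfe{r}$ with respect to the intermediate $G'$ present at that moment. If $\bfe{r}=\bfe{0}$, the same sequence of reductions works verbatim under the enlargement $U_K$; otherwise $\bfe{r}$ was appended to $G'$ and so $\bfe{r}\in U_K$, in which case a single additional reduction of $\bfe{r}$ against itself completes a reduction to $\bfe{0}$.

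For termination, I would argue by contradiction using the right Noetherianity of $A^m$ (which follows from right Noetherianity of $R$, since $A$ is a bijective skew $PBW$ extension over an $RGS$ ring). Let $L_k$ be the right $A$-submodule of $A^m$ generated by $\{\lt(\bfe{g}):\bfe{g}\in U_k\}$. The key claim is: if $U_k\subsetneq U_{k+1}$, then $L_k\subsetneq L_{k+1}$. Fix $\bfe{r}\in U_{k+1}\setminus U_k$; by construction $\bfe{r}$ was added as a remainder reduced with respect to some $G'\supseteq U_k$, hence in particular $\bfe{r}$ is reduced with respect to $U_k$. If one had $\lt(\bfe{r})\in L_k$, writing $\lt(\bfe{r})=\sum_i\lt(\bfe{g}_i)h_i$ with $\bfe{g}_i\in U_k$ and $h_i\in A$ and extracting the coefficient at the position $(\alpha,v):=(\exp(\lm(\bfe{r})),\ind(\lm(\bfe{r})))$, only the indices with $\ind(\lm(\bfe{g}_i))=v$ and $\beta_i:=\exp(\lm(\bfe{g}_i))\leq\alpha$ can contribute, and then only through the single monomial $x^{\alpha-\beta_i}$ of $h_i$. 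Using the identity $\sigma^{\beta_i}(r)c_{\beta_i,\alpha-\beta_i}=c_{\beta_i,\alpha-\beta_i}\psi_{\beta_i,\alpha-\beta_i}(r)$ from Remark \ref{defPsi}, the resulting expression refactors $\lc(\bfe{r})$ as a right $R$-combination of the generators $\lc(\bfe{g}_i)c_{\beta_i,\alpha-\beta_i}$ of the ideal $J_v$ attached to $(\alpha,v)$ in Theorem \ref{teogrobnersigmapbwformodules}(iv). Condition (iii) of Definition \ref{reductionformodules} then makes $\bfe{r}$ reducible with respect to $U_k$, contradicting the choice of $\bfe{r}$. Hence the chain $L_0\subsetneq L_1\subsetneq\cdots$ would be strictly increasing, contradicting right Noetherianity of $A^m$.

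The main obstacle is the coefficient-extraction step inside the termination argument. An expression $\lt(\bfe{r})=\sum_i\lt(\bfe{g}_i)h_i$ in the right module $A^m$ entangles the skew action $\sigma^{\beta_i}$ on the coefficients of $h_i$ with the noncommutative constants $c_{\beta_i,\gamma}$, and one must show that the resulting scalar at a single target position $(\alpha,v)$ can be refactored as a \emph{right} $R$-combination of the prescribed generators of $J_v$. This is precisely where the bijectivity hypothesis of Definition \ref{sigmapbwbijectve} is used, since it guarantees invertibility of the $c_{\alpha,\beta}$ and the existence of the automorphisms $\psi_{\beta,\alpha}$ needed to perform the refactoring. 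Everything else---the invariance $\langle U_k\rangle=\langle F\rangle$, the partitioning of $P(U_K)$ by the sets $D_k$, and the self-reduction of $\bfe{r}$---is structural bookkeeping once the algorithm is set up as in Definition \ref{BFformodules}.
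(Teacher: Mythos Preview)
The paper gives no proof of this corollary; it is stated immediately after Theorem \ref{buchbergerformod} and treated as following from it. Your correctness argument is exactly what is intended: the sets $D_k$ exhaust $P(U_K)$, every processed combination reduces to some $\bfe{r}$ under a subset of $U_K$, and if $\bfe{r}\neq\bfe{0}$ was adjoined then one further step reduces it to $\bfe{0}$ against itself. That part is fine.

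Your termination argument, however, has a genuine gap. The implication ``$\lt(\bfe{r})\in L_k\Rightarrow\bfe{r}$ is reducible with respect to $U_k$'' is false, and the coefficient extraction you describe does not work: monomials $x^\gamma$ of $h_i$ with $\gamma\neq\alpha-\beta_i$ can contribute to the coefficient at $x^\alpha\bfe{e}_v$ through the correction terms $p_{\beta_i,\gamma}$, and nothing forces those contributions into the right ideal $J_v$. Concretely, in the Weyl algebra $A=\sigma(k)\langle x_1,x_2\rangle$ with $x_2x_1=x_1x_2+1$, take $U_0=F=\{x_1,x_2\}$. Then $L_0=\langle x_1,x_2\rangle_A=A$ since $1=x_1(-x_2)+x_2(x_1)$, so $\lt(1)=1\in L_0$; yet $1$ is reduced with respect to $U_0$ because neither $x_1$ nor $x_2$ divides the monomial $1$. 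The algorithm itself exhibits this: processing $S=\{x_1,x_2\}$ with $B_S=\{(1,-1)\}$ produces $x_1x_2-x_2x_1=-1$, which is adjoined, giving $U_0\subsetneq U_1$ while $L_0=L_1=A$. The fix is to run your ascending-chain argument not in $A^m$ but in $\mathrm{gr}(A)^m$, where $\mathrm{gr}(A)=\bigoplus_\alpha R\,\bar{x}^\alpha$ with $\bar{x}^\alpha r=\sigma^\alpha(r)\bar{x}^\alpha$ and $\bar{x}^\alpha\bar{x}^\beta=c_{\alpha,\beta}\bar{x}^{\alpha+\beta}$ (no lower terms). There the coefficient extraction is exact, so $\lc(\bfe{r})\bar{x}^\alpha\bfe{e}_v\in\bar L_k$ is \emph{equivalent} to reducibility of $\bfe{r}$, and $\mathrm{gr}(A)$ is again a bijective skew $PBW$ extension of the right Noetherian ring $R$, hence right Noetherian.
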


\noindent We finish this section by the following useful result.
\begin{corollary}\label{existence}
Every right submodule of the free $A$-module $A^{m}$ has a Gröbner basis.
\end{corollary}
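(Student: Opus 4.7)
The plan is to reduce the statement to the finitely generated case that is already handled by the right Buchberger algorithm. Let $M$ be a right submodule of $A^m$. If $M = 0$, then by definition $\{\bfe{0}\}$ is a Gröbner basis for $M$, and we are done. So assume $M \neq 0$.

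First I would invoke the Noetherian hypothesis on $R$. Since $R$ is $RGS$ it is right Noetherian, and a standard argument for skew $PBW$ extensions (analogous to Hilbert's basis theorem, recorded in \cite{Fajardo3}) yields that $A = \sigma(R)\langle x_1,\dots,x_n\rangle$ is right Noetherian as well. The introduction to Section~5 of the paper already records that $A^m$ is right Noetherian and that every submodule of $A^m$ is therefore finitely generated. Hence there exists a finite subset $F = \{\bfe{f}_1,\dots,\bfe{f}_s\}$ of non-zero vectors of $A^m$ such that $M = \langle F\rangle$.

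Next I would apply Corollary \ref{algorithmforbijectivemodules}: feeding $F$ into Algorithm \ref{algoBuchbergerinAm} produces a finite set $G = \{\bfe{g}_1,\dots,\bfe{g}_t\}$ which, by that corollary, is a Gröbner basis for $\langle F\rangle = M$. This is the whole proof; the content of the existence theorem is really just a packaging of two facts already established in the paper, namely the Noetherianity transferred from $R$ to $A^m$, and the correctness (and termination) of the right Buchberger procedure.

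The only subtle point, and the one I would expect to cause trouble, is the termination of Algorithm \ref{algoBuchbergerinAm}, which is implicit in the word \emph{produces} of Corollary \ref{algorithmforbijectivemodules}. One argues this by following the classical Buchberger pattern adapted to modules over bijective skew $PBW$ extensions: the chain of right submodules of $A^m$ generated by the leading terms of the partial bases $G'$ must stabilize by right Noetherianity of $A^m$, so only finitely many new remainders $\bfe{r}\neq\bfe{0}$ can be adjoined, and once none is adjoined in a pass over $P(G')-P(G)$ the condition of Theorem \ref{buchbergerformod} (ii) is met. Since this justification is already absorbed into Corollary \ref{algorithmforbijectivemodules}, the corollary at hand follows immediately. \hfill$\square$
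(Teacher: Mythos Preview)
Your proof is correct and follows the same approach as the paper, which simply writes ``Apply Theorem \ref{buchbergerformod} and Corollary \ref{algorithmforbijectivemodules}.'' You have just unpacked that one-liner by making explicit the use of Noetherianity (already recorded at the start of Section~5) to obtain a finite generating set and by commenting on termination; the underlying argument is identical.
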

\begin{proof}
Apply Theorem \ref{buchbergerformod} and Corollary \ref{algorithmforbijectivemodules}.
\end{proof}

\section{Examples implemented in \texttt{SPBWE} library}

The extensions skew $PBW$ extensions were implemented in \textrm{Maple} with the development of the \texttt{SPBWE} library (see \cite{Fajardo1}), which allows to make important computations with this type of rings and can also provide answers to several homological problems such as the computation of syzygies; within the library are already developed the algorithms that we present in this paper:  the right division algorithm and the right Buchberger algorithm, below here we will present only a brief view of its execution.

\begin{example}\label{DivisionAlgDifussionAlg}
Consider the diffusion algebra $A:=\sigma(\mathbb{Q}[x_1,x_2])\langle D_1,D_2\rangle$ subject to relation:
\[
D_2D_1=2D_1D_2+x_2D_1-x_1D_2.
\]
Taking the following polynomials in $A$
\[
f:=x_1x_2^2D_1^2D_2+x_1^2x_2D_2:\quad
f_1:=x_1^2x_2D_1D_2:\quad
f_2:=x_2D_1:\quad
f_3:=x_2D_1:
\]
We use the right division algorithm over these polynomials as follow
and we get polynomials $g_1=\tfrac{1}{2}x_2D_1+\tfrac{1}{2}x_1x_2,$ $g_2=-\tfrac{x_1x_2^2D_1}{2}$ and $g_3=x_1x_2,$ such that
$$ f=f_1g_1+f_2g_2+f_3g_3. $$
Therefore,
\[
x_1x_2^2D_1^2D_2+x_1^2x_2D_2\in\langle x_1^2x_2D_1D_2,x_2D_1,x_2D_1\rangle_A
\]
with
\[
x_1x_2^2D_1^2D_2+x_1^2x_2D_2=x_1^2x_2D_1D_2
(\tfrac{1}{2}x_2D_1+\tfrac{1}{2}x_1x_2)+
x_2D_1\Bigl(-\tfrac{x_1x_2^2D_1}{2}\Bigr)+x_2D_1(x_1x_2).
\]
\end{example}
The following example is a non-trivial instance of applicability of the \texttt{SPBWE} library. In particular, it is possible to define iterated skew $PBW$ extensions in the library and compute left or right Gröbner bases over theses.
\begin{example}\label{IteratedSPBWE01}
Let $A=\mathbb{C}[w,\varphi]$ the skew polynomial ring of endomorphism type with $\varphi(q)=\overline{q},$ for $q\in\mathbb{C}.$ Using the \texttt{SPBWE}, we can to define the extension $C=\sigma(A)\langle x,y,z\rangle,$ subjects to relations
\[  yx=2xy,\quad zx=4xz-x,\quad zy=4yz-y,\]
with $A$-endomorphisms $\sigma_i:$ $\sigma_1(w)=2w,$ $\sigma_2(w)=3w,$ $\sigma_3(w)=w,$ and $\sigma_i$-derivations $\delta_i=0$ for $i=1,2,3$.

Consider the right submodule $M:=\langle$\textbf{\emph{$f_1$}}, \textbf{\emph{$f_2$}}, \textbf{\emph{$f_3$}}, \textbf{\emph{$f_4$}}$\rangle_C\subseteq C^4,$ with
\begin{align*}
\emph{\textbf{f}}_1 &= (-y^2,-w y+y,y,w x-x y),\\
\emph{\textbf{f}}_2 &= (-w y-y,x y+y^2-w,w^2,w^2+w x+w y),\\
\emph{\textbf{f}}_3 &= (-x+1,-w y+x^2+x y,w^2+w x+w,x),\\
\emph{\textbf{f}}_4 &= (y^2+x,w x+1,0,w^2+w y-y^2).
\end{align*}
Let $\textbf{\emph{$v$}}:=\emph{\textbf{f}}_1p_1+\emph{\textbf{f}}_2p_2+
\emph{\textbf{f}}_3p_3+\emph{\textbf{f}}_4p_4\in M,$ with
$p_1:=76x+95z,$ $p_2:=xz,$ $p_3:=w-iy$ and $p_4:=iz,$ next, we will use the \texttt{SPBWE} library, in particular, the right division algorithm and the Buchberger algorithm over $C,$ to verify that \textbf{\emph{$v$}} lies in $M.$

First, we use the right division algorithm on $V$ and $M$ as follow
\begin{align*}
& V:=\begin{bsmallmatrix}
-304 x y^2+(-2 w-2) x y z+(-95+I) y^2 z+I x y+I x z-2 w x-I y+w\\
2x^2yz+4xy^2z-Ix^2y-Ixy^2+4wx^2+(-146w+152)xy+(-1-I)wxz-Iwy^2+
(-95w+95)yz-3w^2y+Iz\\
(Iw+152)xy+w^2xz+95yz+2w^2x+(-Iw^2+Iw)y+w^3+w^2\\
-152x^2y+wx^2z+(2w-95)xyz-Iy^2z+76wx^2-Ixy+(w^2+95w)x z-Iwyz+2wx+Iw^2z
\end{bsmallmatrix}:\\
& M:=\begin{bmatrix}
-y^2 & -w y+y & y & wx-xy\\
-wy-y & xy+y^2-w & w^2 & w^2+wx+wy\\
-x+1 & -wy+x^2+xy & w^2+wx+w & x\\
y^2+x & wx+1 & 0 & w^2+wy-y^2
\end{bmatrix}:\\
& >\quad \texttt{DivisionAlgorithm($V,$ $M,$ gradlexrev, TOP, $C,$ right)}
\end{align*}
We obtain four polynomials $q_1=76x+95z,$ $q_2=xz-\tfrac{1}{2}Ix+Iy,$ $q_3=q_4=0$ and a vector $$\emph{\textbf{h}}=\begin{bmatrix}
Iy^2z+Iwxy+Ixz+(-Iw+I)y^2-2wx-Iy+w\\
-Iy^3+4wx^2+6wxy-Iwxz-Iwy^2+\tfrac{1}{2}Iwx+(-3w^2-Iw)y+Iz\\
Iwxy+\bigl(2+\tfrac{1}{2}I\bigr)w^2x+(-2Iw^2+Iw)y+w^3+w^2\\
-Iy^2z-\tfrac{1}{2}Iwx^2-Ixy+Iwy^2-Iwyz+\bigl(\tfrac{1}{2}Iw^2+2w\bigr)x-
Iw^2y+Iw^2z
\end{bmatrix}\in C^{4}$$ such that
$$\emph{\textbf{f}}=\emph{\textbf{f}}_1q_1+\emph{\textbf{f}}_2q_2+
\emph{\textbf{f}}_3q_3+\emph{\textbf{h}}.$$

Since $\emph{\textbf{h}}\neq\emph{\textbf{0}}$, we have a
second option. For this purpose, we use the following statement in \textrm{Maple}
$$ >\quad G:=\texttt{BuchbergerAlgSkewPoly($M$, gradlex, TOP, $C$, right)}$$
We obtain a Gröbner basis of $M,$ $G=\{\emph{\textbf{h}}_1,\emph{\textbf{h}}_2,
\emph{\textbf{h}}_3,\emph{\textbf{h}}_4,\emph{\textbf{h}}_5,
\emph{\textbf{h}}_6,\emph{\textbf{h}}_7,\emph{\textbf{h}}_8\},$ with
\[
\emph{\textbf{h}}_1=\emph{\textbf{f}}_1,\  \emph{\textbf{h}}_2=\emph{\textbf{f}}_2,\
\emph{\textbf{h}}_3=\emph{\textbf{f}}_3,\
\emph{\textbf{h}}_4=\emph{\textbf{f}}_4,
\]
\begin{align*}
&\emph{\textbf{h}}_5=
\begin{bsmallmatrix}
-xy^2-y^3-\tfrac{1}{4}x^2\\ -\tfrac{1}{4}wx^2+(-w+1)y^2-\tfrac{1}{4}x\\ y^2\\ \tfrac{1}{2}wxy-\tfrac{1}{4}w^2x
\end{bsmallmatrix},\
\emph{\textbf{h}}_6=
\begin{bsmallmatrix}
-2wxy+(2w+2)y^2-2 y\\ -2y^3+2wy^2-wx+2wy\\ -2wxy+w^2x+(-4w^2-2w)y\\
wx^2-2xy-2wy^2+w^2x-2w^2y
\end{bsmallmatrix},\\
&\emph{\textbf{h}}_7=
\begin{bsmallmatrix}
-\tfrac{1}{4}wx^2y+(-\tfrac{1}{2}w+\tfrac{1}{2})xy^2-\tfrac{1}{4}xy-y^2\\
\tfrac{1}{2}wxy^2+wy^3-\tfrac{1}{16}wx^2-\tfrac{1}{4}wxy\\
-\tfrac{1}{4}wx^2y-wxy^2+\tfrac{1}{16}w^2x^2-\tfrac{1}{4}wxy+(-w^2-w)y^2\\
\tfrac{1}{16}wx^3+(\tfrac{1}{2}w-\tfrac{1}{4})x^2y+(\tfrac{1}{2}w-1)xy^2+
\tfrac{1}{16}w^2x^2+\tfrac{1}{4}w^2xy
\end{bsmallmatrix},\\
&\emph{\textbf{h}}_8=
\begin{bsmallmatrix}
\tfrac{5}{4}wx^2y^2+(4w-2)xy^3+\tfrac{1}{2}xy^2+2y^3\\
-wxy^3-2wy^4+(\tfrac{1}{16}w^2+\tfrac{1}{16}w)x^2y+(\tfrac{1}{2}w^2-
\tfrac{1}{2}w+\tfrac{1}{2})xy^2\\
\tfrac{1}{2}wx^2y^2+2wxy^3+(-\tfrac{1}{8}w^2-\tfrac{1}{16}w)x^2y+
\tfrac{1}{2}xy^2+(2w^2+2w)y^3\\
(-w+2)xy^3-\tfrac{1}{96}w^2x^3+(-\tfrac{7}{24}w^2+\tfrac{1}{4}w)x^2y-
\tfrac{1}{2}w^2xy^2
\end{bsmallmatrix}.
\end{align*}
Finally, using the statement
$$ >\quad \texttt{DivisionAlgorithm($V,$ $G,$ gradlex, TOP, $C,$ right)}$$ we obtain eight polynomials $q_1=76x+95z,$ $q_2=xz-1/2Ix+Iy,$ $q_3=w,$ $q_4=iz,$ $q_6=1/2i$ and $q_5=q_7=q_8=0$ such that
\[
\emph{\textbf{h}}=\emph{\textbf{h}}_1q_1+\emph{\textbf{h}}_2q_2+
\emph{\textbf{h}}_3q_3+\emph{\textbf{h}}_4q_4+\emph{\textbf{h}}_5q_5+
\emph{\textbf{h}}_6q_6+\emph{\textbf{h}}_7q_7+\emph{\textbf{h}}_8q_8.
\]
Consequently, the vector $h$ lies in $M.$
\end{example}

\section{Future perspective}

As consequence of the algorithms presented in this paper over a bijective skew $PBW$ extension $A,$ we can respond to problems of homological algebra such as: computation of the right module of syzygies of a right $A$-module $M;$ computation of a right inverse of rectangular matrix on $A;$ computation of the intersection and quotient for ideals or modules over $A$; computation of the Ext$_A^{r}(M,N),$ where $M$ is a finitely generated left $A$-submodule of $A^{m}$ and $N$ is a finitely generated centralizing $A$-subbimodule of $A^{l}$; among another applications. Now is possible to complete the SPBWE library and provide support in areas of non-commutative algebra that have not yet implemented computationally.


\end{document}